\documentclass[reqno]{amsart}
\usepackage{amssymb}
\usepackage{mathrsfs}
\usepackage{amsmath,amstext,amsfonts,verbatim}

\usepackage[latin1]{inputenc}

\usepackage{amsmath,amsthm,amssymb}

\usepackage{amsfonts}

\usepackage{amsmath,amssymb}
\usepackage{graphicx}
\usepackage{color}
\usepackage{indentfirst}
\usepackage{cite}

\setlength{\parskip}{2pt} \setlength{\textwidth}{15.2cm}
\setlength{\oddsidemargin}{.5cm}
\setlength{\evensidemargin}{0.5cm} \setlength{\textheight}{23cm}
\setlength{\topmargin}{-1cm} \setlength{\footskip}{1.5cm}

\newtheorem{theorem}{Theorem}[section]
\newtheorem{lemma}[theorem]{Lemma}

\theoremstyle{definition}
\newtheorem{definition}[theorem]{Definition}

\newtheorem{prop}{Proposition}[section]
\newtheorem{Corollary}[theorem]{Corollary}
\newtheorem{fact}[theorem]{Fact}
\newtheorem{question}[theorem]{Question}

\theoremstyle{remark}
\newtheorem{remark}[theorem]{Remark}

\numberwithin{equation}{section}

\newcommand{\neweq}[1]{\begin{equation}\label{#1}}

\def\phi{\varphi}

\def\incep{\left\{\begin{array}{cl} }
 \def\termin{\end{array}\right. }
\def\2af{2^*_\alpha}

\title [Weyl mean equicontinuity and Weyl mean sensitivity of a random dynamical system]
{\textbf{Weyl mean equicontinuity and Weyl mean sensitivity of a random dynamical system}}

\author{Yuan Lian}
\address{College of Mathematics and Statistics, Taiyuan Normal University, Taiyuan 030619, China}
\email{andrea@tynu.edu.cn}




\keywords{Mean equicontinuity, mean sensitivity, random dynamical system}

\subjclass[]{}

\date{}
\subjclass[2010]{37A35,37B40.}

\begin{document}

\begin{abstract}
In this article, we introduce the concepts of Weyl-mean equicontinuity and Weyl-mean sensitivity of a random dynamical system associated to an infinite countable discrete
amenable group action. We obtain the dichotomy result to Weyl-mean equicontinuity and Weyl-mean sensitivity of a random dynamical system when the corresponding skew product transformation is minimal and $\Omega$ is finite.
\end{abstract}

\maketitle

\section{introduction}
The theoretical basis for amenable group actions should be returned to the pioneering paper \cite{OW} by Ornstein and Weiss, and it was widely developed by Rudolph and Weiss
\cite{RW} and Danilenko \cite{D}. See also Benjy Weiss's famous article \cite{W}. The theory of random transformation is more suitable for studying the systems which are
formed by the iterations of different mappings than the classical dynamical systems which are only formed by the iterations of one mapping. The construction of this
theoretical structure comes from the article of Ulam and von Neumann \cite{Uv}. A few years later, the random ergodic theorem was established and proved by Kakutani \cite{K1}.
In the 1970s, their work continued within the framework of the relative ergodic theory (see citation \cite{LW,T}), but all this attracted only a little interest. The
emergence of stochastic flows as solutions to stochastic differential equation has given the subject a huge impetus.

When the other side of the concept of sensitivity is taken into account, the concept of equicontinuity at a point naturally arises, see \cite{GW}. It is well
known that equicontinuous systems have simple dynamical behaviors. If a mapping collection defined by the action of a group is a family of uniformly equicontinuous, then the
dynamical system is called equicontinuous systems. Equicontinuous system is the simplest dynamic system; in fact,
equicontinuous minimal systems have a complete classification.

Because mean equicontinuity is related to the ergodicity of measurable dynamical systems(i.e. dynamical systems with invariant probability measures),
it has attracted extensive interest of scholars in recent years. In particular, it has been shown that using a version of the measure theory of mean equicontinuity,
it is possible to characterize some special cases when a measure-preserving system has a discrete spectrum \cite{G1} and when the maximal equicontinuous factor is
actually an isomorphism (see \cite{BLM, LTY1})

There are two variants of the concept of mean equicontinuity: one is called Weyl mean equicontinuity, and the other is called Besicovitch mean equicontinuity.
The concepts of Weyl-and Besicovitch-mean equicontinuity are introduced, which are used for in \cite{LTY1} for $\mathbb{Z}$ actions and in \cite{FGL} for amenable group
actions, respectively. Inspired by the idea, we will introduce the concepts of Weyl-mean equicontinuity and Weyl-mean sensitivity of a random dynamical system associated
to an infinite countable discrete amenable group action.

We know that if a dynamical system $(X,T)$ is almost equicontinuous, then the set of equicontinuous points is consistent with the set of all transitive points, so that it
is uniformly rigid \cite{GW}, the topological entropy is zero \cite{GM}. We known that if $(X,T)$ is minimal, then $(X,T)$ is either equicontinuous or sensitive
\cite{AY}; and if $(X,T)$ is transitive, then $(X,T)$ is either almost equicontinuous or sensitive \cite{AAB}. For the case of almost mean equicontinuous systems, the
authors in \cite{LTY1} showed that the set of transitive points is contained in the set of all mean equicontinuous points and there are examples in which they do not
coincide.

In \cite{LTY1}, the authors showed that if a dynamical system $(X,T)$  is minimal, then $(X,T)$  is either mean equicontinuous or mean sensitive, and if $(X,T)$  is
transitive, then $(X,T)$ is either almost mean equicontinuous or mean sensitive.  For amenable group actions, the authors obtained a dichotomy result related to almost
Weyl-mean equicontinuity and Weyl-mean sensitivity for when a group action is transitive and the action is either Weyl-mean sensitive or Weyl-mean equicontinuous if the
group action is a minimal system\cite{ZHL}.

In the setting of continuous bundle random dynamical system, instead of considering the iteration of a mapping, we study the successive continuous application of different
transformations chosen at random.

The paper is organized as follows: we begin in Section 2 by recalling some basic notations, definitions and results regarding continuous bundle random dynamical systems
or RDS simply. In Section 3 we introduce the concept and basic propositions of Weyl- mean equicontinuity for RDS. In Section 4 we introduce the concept and basic
propositions of Weyl- mean sensitivity for RDS and we obtain a dichotomy result related to Weyl-mean equicontinuity and Weyl-mean sensitivity for RDS when the corresponding
skew product transformation is minimal in the last part of the article.

\section{Preliminaries}

First, we review the basic knowledge of group actions and the definition of a random dynamical systems to be used in this paper. For more information on random dynamical
systems, refer to \cite[\textit{ p.\,27}]{DZ} and some well-known facts about ergodic theory, which can be found in many excellent books and papers
on the subject, e.g. \cite{A,DZ,KL,V1,V2}.

Let $\Omega\neq\emptyset$ be an abstract set, $\mathcal{F}$ a $\sigma-$algebra of subsets of $\Omega$ and $\mathbb{P}$ a probability measure on $\mathcal{F}$. The pair
$(\Omega,\mathcal{F})$ is called a measurable space and the triple $(\Omega,\mathcal{F},\mathbb{P})$ a probability space. A probability space is said to be complete if
the $\sigma-$algebra $\mathcal{F}$ contains all subsets of sets of probability 0.

By a measurable dynamical $G$-system (MDS) $(Y, \mathcal{D}, \nu, G)$ we mean a probability space $(Y, \mathcal{D}, \nu)$ and a group $G$ of invertible measure-preserving
transformations of $(Y,\mathcal{D}, \nu)$ with $e_{G}$ acting as the identity transformation.

\begin{definition}
The action $G\curvearrowright X$ is minimal if $X$ has no nonempty proper $G$ invariant closed subsets. A closed $G$-invariant set $A \subseteq X$ is minimal if the
restriction of the action to $A$ is minimal. The action $G\curvearrowright X$ is (topologically) transitive if for all nonempty open sets $U, V \subseteq  X$ there
exists an $s \in G$ such that $sU\cap V\neq\emptyset$.
\end{definition}

Let $(\Omega,\mathcal{F},\mathbb{P},G)$ denote an MDS, where $(\Omega,\mathcal{F},\mathbb{P})$ is a Lebesgue space. In particular, $(\Omega,\mathcal{F},\mathbb{P})$ is
complete and countably separated. Now let $(X,\mathcal{B})$ be a measurable space and $\mathcal{E}\in\mathcal{F}\times\mathcal{B}$. Then
$(\mathcal{E},(\mathcal{F}\times\mathcal{B})_{\mathcal{E}})$ forms naturally a measurable space with $(\mathcal{F}\times\mathcal{B})_{\mathcal{E}}$ the $\sigma-$algebra
of $\mathcal{E}$ given by restricting $\mathcal{F}\times\mathcal{B}$ over $\mathcal{E}$. Set $\mathcal{E}_{\omega}=\{x\in X:(\omega,x)\in \mathcal{E}\}$ for each
$\omega\in\Omega$. A bundle random dynamical system or random dynamical system (RDS) associated to $(\Omega,\mathcal{F},\mathbb{P},G)$ is a family
$$\mathbf{F}=\{F_{g,\omega}:\mathcal{E}_{\omega}\rightarrow \mathcal{E}_{g\omega}\mid g\in G, \omega\in \Omega\}$$ satisfying:

\begin{enumerate}
\item for each $\omega\in\Omega$, the transformation $F_{e_{G},\omega}$ is the identity over $\mathcal{E}_{\omega}$.

\item for each $g\in G$, the map $(\mathcal{E},(\mathcal{F}\times\mathcal{B})_{\mathcal{E}})\rightarrow (X,\mathcal{B})$, given by $(\omega,x)\mapsto F_{g,\omega}(x)$, is
      measurable and. \item for each $\omega\in\Omega$ and all $g_{1},g_{2}\in G,F_{g_{2},g_{1}\omega}\circ F_{g_{1},\omega}=F_{g_{2}g_{1},\omega}$(and so
      $F_{g^{-1},\omega}=(F_{g,g^{-1}\omega})^{-1}$ for each $g\in G$).
\end{enumerate}

In this case, $G$ has a natural measurable action on $\mathcal{E}$ with $(\omega,x)\rightarrow(g\omega,F_{g,\omega}x)$ for each $g\in G$, called the corresponding skew
product transformation.

Let the family $\mathbf{F}=\{F_{g,\omega}:\mathcal{E}_{\omega}\rightarrow \mathcal{E}_{g\omega}\mid g\in G, \omega\in \Omega\}$ be an RDS over
$(\Omega,\mathcal{F},\mathbb{P},G)$, where $X$ is a compact metric space with metric $d$ and equipped with the Borel $\sigma-$algebra $\mathcal{B}_{X}$. If for
$\mathbb{P}-$a.e. $\omega\in\Omega$, $\emptyset\neq\mathcal{E}_{\omega}\subseteq X$ is a compact subset and $F_{g,\omega}$ is a continuous map for each $g\in G$(and so
$F_{g,\omega}:\mathcal{E}_{\omega}\rightarrow \mathcal{E}_{g\omega}$ is a homeomorphism for $\mathbb{P}-$a.e. $\omega\in\Omega$ and each $g\in G$), then it is called a
continuous bundle RDS.

These concepts generalize the classical concepts of dynamical systems as follows. $(\Omega,\mathcal{F},\mathbb{P},G)$ is a trivial MDS, if $\Omega$ is a
singleton. For a compact metric space $X$, a continuous bundle RDS associated to a trivial MDS means that there is a topological $G-$ action $(K,G)$ for some non-empty
compact subset $K\subseteq X$, that is, the group $G$ acts on $K$ in the sense that there exists a family of homeomorphisms $\{F_{g}:g\in G\}$ of $K$ such that
$F_{g_{2}}\circ F_{g_{1}}=F_{g_{2}g_{1}}$ for all $g_{1},g_{2}\in G$ and $F_{e_{G}}$ acts as the identity transformation over $K$. The pair $(K,G)$ is also called a
topological dynamical $G-$systems (TDS). For more information on the theory of random systems see \cite{C,DGS,G,K,LQ,R,SH}.

In addition, let's recall the concept of Banach density. Let $E\subseteq G$, $\{F_{n}\}$ a F{\o}lner sequence of $G$. The upper density $\bar{d}(E)$ of $E$ respect to
$\{F_{n}\}$ by $$\bar{d}(E)=\limsup_{n\rightarrow\infty}\frac{|E\cap F_{n}|}{|F_{n}|}.$$
Similar, $\underline{d}(E)$, the lower density of $E$ respect to $\{F_{n}\}$, is $$\underline{d}(E)=\liminf_{n\rightarrow\infty}\frac{|E\cap F_{n}|}{|F_{n}|}$$
One say $E$ has density $d(E)$ if $\bar{d}(E)=\underline{d}(E)$, in which $d(E)$ is equal to this common value. The upper Banach density $BD^{*}(E)$ is
$$BD^{*}(E)=\limsup_{F}\frac{|F\cap E|}{|F|}$$
Similarly, lower Banach density $BD_{*}(E)$ and Banach density $BD(E)$.

\section{Besicovitch-,Weyl-,and Banach- mean equicontinuity for RDS}
In this section, we study the localization of mean equicontinuity. Inspired by the papers \cite{FGL}, \cite{LTY1} and \cite{ZHL}, we consider the concept of Weyl-mean
equicontinuity for random dynamical systems as follows. Before starting this section, for the convenience of using the content later in the article, we will make the
following marks. Let $t\in \Omega$, $x,y\in X$,
$$
\widetilde{d}(tx,ty)=\sup_{\omega\in\Omega}d(F_{t,\omega}x,F_{t,\omega}y).
$$
when $x,y\in \mathcal{E}_{\omega}$ for some $\omega\in G$; otherwise $\widetilde{d}_{t}(x,y)=\infty$.

\begin{definition}
Let $G$ be an amenable group and $\mathcal{F}=\{F_{n}\}_{n\in \mathbb{N}}$ be a F{\o}lner sequence of $G$. We call the continuous random dynamical system
$\mathbf{F}=\{F_{g,\omega}:\mathcal{E}_{\omega}\rightarrow\mathcal{E}_{g\omega}\mid g\in G, \omega\in \Omega\}$ is  {\it{Besicovitch-$\mathcal{F}$-mean equicontinuous}}
if for every $\varepsilon>0$, there exist $\delta>0$ such that
$$
D^{(r)}_{\mathcal{F}}(x,y):=\limsup_{n\rightarrow\infty}\frac{1}{|F_{n}|}\sum_{t\in F_{n}}\widetilde{d}(tx,ty)<\varepsilon,
$$
for all $x,y\in\mathcal{E}_{\omega}$ for some $\omega\in \Omega$ with $d(x,y)<\delta$.

The dependence on the F{\o}lner sequence immediately motivates the next definition. We say the continuous random dynamical system
$\mathbf{F}=\{F_{g,\omega}:\mathcal{E}_{\omega}\rightarrow\mathcal{E}_{g\omega}\mid g\in G, \omega\in \Omega\}$ is {\it {Weyl-mean equicontinuous}} (or simply WME) if for every
$\varepsilon>0$, there exists $\delta>0$ such that whenever $x,y\in \mathcal{E}_{\omega}$ for some $\omega\in \Omega$ with $d(x,y)<\delta$, we have
\begin{equation}\label{weya}
  D^{(r)}(x, y):=\sup \ \{D^{(r)}_{\mathcal{F}}(x, y) : \mathcal{F} \text {  is a F{\o}lner sequence}\}<\varepsilon.
\end{equation}
\end{definition}

\begin{definition}\cite{ZHL}
Let $G$ be an amenable group and $\mathcal{F}=\{F_{n}\}_{n\in\mathbb{N}}$ be a F{\o}lner sequence of $G$. We call the action $G\curvearrowright X$ is
{\it {Besicovitch-$\mathcal{F}$-mean equicontinuous}} if for every $\varepsilon>0$, there exists $\delta>0$ such that
$$
D_{\mathcal{F}}(x,y):=\limsup_{n\rightarrow\infty}\frac{1}{|F_{n}|}\sum_{g\in F_{n}}d(gx,gy)<\varepsilon,
$$
for all $x, y\in X$ with $d(x, y)<\delta$. The action $G\curvearrowright X$ is
{\it {Weyl-mean equicontinuous}} if for every $\varepsilon>0$, there exists $\delta>0$ such that for all $x, y\in X$ with $d(x, y)<\delta$ we have
\begin{equation}
  D(x, y):=\sup \ \{D_{\mathcal{F}}(x, y) : \mathcal{F} \  \text {  is a F{\o}lner sequence}\}<\varepsilon.
\end{equation}
\end{definition}

\begin{fact}
Let $X$ be a compact metric space, $G$ be a group and $(\Omega,\mathcal{F},\mathbb{P},G)$ is a trivial MDS. If the continuous random dynamical system
$\mathbf{F}=\{F_{g,\omega}:\mathcal{E}_{\omega}\rightarrow\mathcal{E}_{g\omega}\mid g\in G, \omega\in \Omega\}$ is {\it {Weyl-mean equicontinuous}}, then the action
$G\curvearrowright K$ is {\it {Weyl-mean equicontinuous}} for some non-empty compact subset $K\subseteq X$.
\end{fact}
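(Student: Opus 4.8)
The plan is to simply unwind the definitions and exploit the fact that for a trivial MDS the set $\Omega$ is a singleton, so that the supremum over $\omega$ in the definition of $\widetilde d$ disappears. Write $\Omega=\{\omega_0\}$ and set $K=\mathcal{E}_{\omega_0}$. Since $\mathbf F$ is a continuous bundle RDS and $\mathbb P$ is the point mass at $\omega_0$, the requirement that $\mathcal{E}_\omega$ be a non-empty compact subset of $X$ for $\mathbb P$-a.e.\ $\omega$ forces $K\neq\emptyset$ and $K$ compact; moreover the cocycle identity $F_{g_2,g_1\omega_0}\circ F_{g_1,\omega_0}=F_{g_2g_1,\omega_0}$ together with $F_{e_G,\omega_0}=\mathrm{id}_K$ shows that $\{F_g:=F_{g,\omega_0}\mid g\in G\}$ is a family of homeomorphisms of $K$ realizing a topological $G$-action $(K,G)$, exactly as recalled after the definition of continuous bundle RDS (note $F_{g,\omega_0}\colon\mathcal{E}_{\omega_0}\to\mathcal{E}_{g\omega_0}=K$, so $K$ is $G$-invariant).

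Next I would observe that for every $t\in G$ and all $x,y\in K$ one has
\[
\widetilde d(tx,ty)=\sup_{\omega\in\Omega}d(F_{t,\omega}x,F_{t,\omega}y)=d(F_{t,\omega_0}x,F_{t,\omega_0}y)=d(tx,ty),
\]
the last expression now being read in the TDS $(K,G)$. Consequently, for any F{\o}lner sequence $\mathcal F=\{F_n\}$,
\[
D^{(r)}_{\mathcal F}(x,y)=\limsup_{n\to\infty}\frac{1}{|F_n|}\sum_{t\in F_n}\widetilde d(tx,ty)=\limsup_{n\to\infty}\frac{1}{|F_n|}\sum_{g\in F_n}d(gx,gy)=D_{\mathcal F}(x,y),
\]
and taking the supremum over all F{\o}lner sequences gives $D^{(r)}(x,y)=D(x,y)$ for all $x,y\in K$.

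Finally I would transfer the equicontinuity condition. Fix $\varepsilon>0$; since $\mathbf F$ is Weyl-mean equicontinuous there is $\delta>0$ such that whenever $x,y$ lie in $\mathcal{E}_\omega$ for some $\omega\in\Omega$ (which here simply means $x,y\in K$) and $d(x,y)<\delta$, then $D^{(r)}(x,y)<\varepsilon$. By the previous paragraph this says exactly that $D(x,y)<\varepsilon$ for all $x,y\in K$ with $d(x,y)<\delta$, which is the definition of Weyl-mean equicontinuity of the action $G\curvearrowright K$. I do not expect a genuine obstacle: the only points that deserve care are checking that $K$ inherits non-emptiness and compactness from the ``$\mathbb P$-a.e.'' hypothesis when $\mathbb P$ is a point mass, and noting that the clause ``$x,y\in\mathcal{E}_\omega$ for some $\omega$'' in the RDS definition collapses to ``$x,y\in K$'' when $\Omega$ is a singleton; everything else is a direct substitution.
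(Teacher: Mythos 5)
Your proof is correct, and it is essentially the argument the paper intends: the Fact is stated without proof precisely because, as the paper notes in Section~2, a continuous bundle RDS over a trivial MDS is nothing but a topological $G$-action $(K,G)$ with $K=\mathcal{E}_{\omega_0}$, and with $\Omega$ a singleton the supremum in $\widetilde d$ collapses so that $D^{(r)}=D$ on $K$. Your care about the ``$\mathbb{P}$-a.e.'' clause and the $G$-invariance of $K$ is appropriate but does not change the substance; nothing further is needed.
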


\begin{definition}
Let $G$ be a countable discrete group, Fin$(G)$ be the family of all non-empty finite subsets of $G$ and $X$ be a compact metric space with metric $d$. If $x,y\in\mathcal{E}_{\omega}$ for some $\omega\in \Omega$, we denote
$$
\overline{D}^{(r)}(x,y)=\inf_{K\in {\rm {Fin}}(G)}\sup_{g\in G}\frac{1}{|K|}\sum_{t\in Kg}\widetilde{d}(tx,ty).
$$
So we call the continuous random dynamical system
$$\mathbf{F}=\{F_{g,\omega}:\mathcal{E}_{\omega}\rightarrow\mathcal{E}_{g\omega}\mid g\in G, \omega\in \Omega\}$$
is {\it{Banach mean equicontinuous}} or simply {\it {BME}} if for any $\varepsilon>0$, there is a positive real number $\delta>0$ such that
$\overline{D}^{(r)}(x,y)<\varepsilon$ whenever $x,y\in \mathcal{E}_{\omega}$ for some $\omega\in \Omega$ with $d(x,y)<\delta$.
\end{definition}

\begin{definition}\cite{ZHL}
Let $G$ be a discrete group and Fin$(G)$ be the family of all non-empty finite subsets of $G$. Let $X$ be a compact metric space with metric $d$. For $x,y\in X$, we denote
$$
\overline{D}(x,y)=\inf_{F\in {\rm {Fin}}(G)}\sup_{g\in G}\frac{1}{|F|}\sum_{t\in Fg}d(tx,ty).
$$
So we call the action $G\curvearrowright X$ is {\it {Banach-mean equicontinuous}} or simply {\it {B-mean equicontinuous}} if for any $\varepsilon>0$, there exists $\delta>0$
such that $\overline{D}(x,y)<\varepsilon$ whenever $d(x,y)<\delta$ for $x, y\in X$.
\end{definition}

\begin{fact}\label{BME}
Let $X$ be a compact metric space, $G$ be a group and $(\Omega,\mathcal{F},\mathbb{P},G)$ is a trivial MDS. If the continuous random dynamical system
$$\mathbf{F}=\{F_{g,\omega}:\mathcal{E}_{\omega}\rightarrow\mathcal{E}_{g\omega}\mid g\in G, \omega\in \Omega\}$$ is {\it{Banach mean equicontinuous}}, then and only the
action $G\curvearrowright K$ is {\it {Banach-mean equicontinuous}} for some non-empty compact subset $K\subseteq X$.
\end{fact}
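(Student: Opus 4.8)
The plan is to unwind both definitions and observe that, once $\Omega$ collapses to a point, the random modulus $\overline{D}^{(r)}$ becomes literally the deterministic modulus $\overline{D}$. Concretely, since $(\Omega,\mathcal{F},\mathbb{P},G)$ is a trivial MDS, write $\Omega=\{\omega_{0}\}$ and put $K:=\mathcal{E}_{\omega_{0}}$; because $\mathbf{F}$ is a continuous bundle RDS, this $K$ is a non-empty compact subset of $X$. The cocycle identity $F_{g_{2},g_{1}\omega}\circ F_{g_{1},\omega}=F_{g_{2}g_{1},\omega}$ becomes $F_{g_{2},\omega_{0}}\circ F_{g_{1},\omega_{0}}=F_{g_{2}g_{1},\omega_{0}}$, so setting $F_{g}:=F_{g,\omega_{0}}$ gives exactly a family of homeomorphisms of $K$ realising the action $G\curvearrowright K$; this is the correspondence recalled in Section 2 between continuous bundle RDS over a trivial MDS and topological $G$-systems $(K,G)$.

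Next I would compare the two modulus functions. For $x,y\in K=\mathcal{E}_{\omega_{0}}$ and $t\in G$, the supremum defining $\widetilde{d}$ ranges over the single index $\omega_{0}$, hence $\widetilde{d}(tx,ty)=d(F_{t,\omega_{0}}x,F_{t,\omega_{0}}y)=d(tx,ty)$, the deterministic orbit distance. Substituting this into the averages, for every $F\in\mathrm{Fin}(G)$ and every $g\in G$ one has $\frac{1}{|F|}\sum_{t\in Fg}\widetilde{d}(tx,ty)=\frac{1}{|F|}\sum_{t\in Fg}d(tx,ty)$; taking $\sup_{g\in G}$ and then $\inf_{F\in\mathrm{Fin}(G)}$ yields $\overline{D}^{(r)}(x,y)=\overline{D}(x,y)$ for all $x,y\in K$. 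Note also that the clause ``$x,y\in\mathcal{E}_{\omega}$ for some $\omega\in\Omega$'' appearing in the RDS definition is vacuous here, always holding with $\omega=\omega_{0}$.

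With this pointwise identity the two equicontinuity conditions are word-for-word the same: $\mathbf{F}$ is BME iff for every $\varepsilon>0$ there is $\delta>0$ with $\overline{D}^{(r)}(x,y)<\varepsilon$ whenever $x,y\in K$ and $d(x,y)<\delta$, which by $\overline{D}^{(r)}=\overline{D}$ on $K$ is precisely the assertion that $G\curvearrowright K$ is Banach-mean equicontinuous. The forward implication produces the witness $K=\mathcal{E}_{\omega_{0}}$, and the converse uses the same $K$, so both directions of the ``if and only if'' are covered simultaneously.

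There is no serious obstacle here; the only points needing a line of care are (i) invoking the continuous-bundle hypothesis to know that $K=\mathcal{E}_{\omega_{0}}$ is non-empty and compact, so that $(K,G)$ is a genuine TDS, and (ii) checking that the ``for some $\omega$'' quantifiers on the two sides match trivially when $\Omega$ is a singleton. If a sharper statement were wanted, the same computation shows that $\overline{D}^{(r)}$ and $\overline{D}$ agree as functions on $K\times K$, not merely that the $\varepsilon$--$\delta$ formulations are equivalent.
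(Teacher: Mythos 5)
Your proposal is correct: the paper states this Fact without any proof, and your argument---identifying $K=\mathcal{E}_{\omega_{0}}$, noting that $\widetilde{d}(tx,ty)$ collapses to $d(tx,ty)$ when $\Omega$ is a singleton, and hence that $\overline{D}^{(r)}=\overline{D}$ on $K\times K$---is exactly the routine definition-unwinding the paper implicitly intends. Nothing further is needed.
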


\begin{fact}
When $G$ is a countable amenable group, the above two definitions are equivalent.
\end{fact}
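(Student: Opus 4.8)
The plan is to establish, for all $x,y$ that lie in a common fibre $\mathcal{E}_{\omega}$, the pointwise identity
$$
D^{(r)}(x,y)=\overline{D}^{(r)}(x,y)
$$
(both sides being $+\infty$ otherwise). Once this is in hand, the two conditions ``$\forall\varepsilon>0\ \exists\delta>0:\ d(x,y)<\delta\Rightarrow D^{(r)}(x,y)<\varepsilon$'' and the corresponding one for $\overline{D}^{(r)}$ literally coincide, which is the assertion of the Fact; the same computation with $\widetilde d(tx,ty)$ replaced by $d(tx,ty)$ simultaneously gives the group-action versions of \cite{ZHL}. Throughout write $\varphi(t)=\widetilde d(tx,ty)$ for $t\in G$; since $X$ is compact, $0\le\varphi\le\mathrm{diam}\,X$, so every quantity below is finite.

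First I would prove $D^{(r)}(x,y)\le\overline{D}^{(r)}(x,y)$. Fix an arbitrary F{\o}lner sequence $\mathcal{F}=\{F_{n}\}$ and an arbitrary $K\in\mathrm{Fin}(G)$, and set $M_{K}=\sup_{g\in G}\frac{1}{|K|}\sum_{t\in Kg}\varphi(t)$. Double-count $\sum_{g\in G}\sum_{t\in F_{n}\cap Kg}\varphi(t)$: summing over $t$ first gives $|K|\sum_{t\in F_{n}}\varphi(t)$, because $\{g:t\in Kg\}=K^{-1}t$ has exactly $|K|$ elements; summing over $g$ first, and using that each inner sum is at most $M_{K}|K|$ while only the $g\in K^{-1}F_{n}$ contribute, gives at most $M_{K}|K|\,|K^{-1}F_{n}|$. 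Hence $\frac{1}{|F_{n}|}\sum_{t\in F_{n}}\varphi(t)\le M_{K}\,|K^{-1}F_{n}|/|F_{n}|$, and since $K^{-1}$ is finite the F{\o}lner property forces $|K^{-1}F_{n}|/|F_{n}|\to1$, so $D^{(r)}_{\mathcal F}(x,y)=\limsup_{n}\frac{1}{|F_{n}|}\sum_{t\in F_{n}}\varphi(t)\le M_{K}$. Taking the infimum over $K\in\mathrm{Fin}(G)$ and then the supremum over $\mathcal{F}$ yields the inequality.

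For the reverse inequality $D^{(r)}(x,y)\ge\overline{D}^{(r)}(x,y)$ I would argue by direct construction. Put $c=\overline{D}^{(r)}(x,y)$, fix $\varepsilon>0$, and fix once and for all some F{\o}lner sequence $\{E_{n}\}$ of $G$ (one exists since $G$ is amenable; a two-sided one may be used if needed). Because $c$ is an infimum over $\mathrm{Fin}(G)$, for each $n$ we have $\sup_{g\in G}\frac{1}{|E_{n}|}\sum_{t\in E_{n}g}\varphi(t)\ge c$, so we may pick $g_{n}\in G$ with $\frac{1}{|E_{n}|}\sum_{t\in E_{n}g_{n}}\varphi(t)>c-\varepsilon$. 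Setting $F_{n}:=E_{n}g_{n}$, the sequence $\{F_{n}\}$ is again F{\o}lner (right translation leaves $|SE_{n}\triangle E_{n}|/|E_{n}|$ and $|E_{n}|$ unchanged), and along it every average exceeds $c-\varepsilon$; therefore $D^{(r)}(x,y)\ge D^{(r)}_{\{F_{n}\}}(x,y)\ge c-\varepsilon$, and letting $\varepsilon\to0$ finishes the argument.

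The only real work sits in the first inequality: the subtle point is that the estimate must dominate the \emph{$\limsup$} of the averages along \emph{every} F{\o}lner sequence, and this is exactly what the double-counting plus the F{\o}lner property for the finite set $K^{-1}$ deliver, so the main thing to watch is the book-keeping of left versus right translates (the identities $\{g:Kg\cap F_{n}\neq\emptyset\}=K^{-1}F_{n}$ and $\{g:t\in Kg\}=K^{-1}t$). The reverse inequality is essentially free once one observes that right translates of a F{\o}lner sequence are again F{\o}lner; amenability of $G$ is used only to guarantee that at least one F{\o}lner sequence exists.
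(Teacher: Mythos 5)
Your proposal is correct and follows essentially the same route as the paper: it reduces the equivalence of the two notions to the pointwise identity $D^{(r)}(x,y)=\overline{D}^{(r)}(x,y)$, which is exactly the paper's Theorem \ref{DD}, from which the Fact (recorded in the paper as Corollary \ref{Cor1}) is immediate. The only divergence is in the inequality $D^{(r)}(x,y)\le\overline{D}^{(r)}(x,y)$, where you use a double-counting argument over the translates $Kg$ together with $|K^{-1}F_n|/|F_n|\to 1$, whereas the paper selects a minimizing translate $t'F_n$ and compares it to $F_n$ via $|t'F_n\triangle F_n|/|F_n|\to 0$; both are valid, and your proof of the reverse inequality (right-translating a F{\o}lner sequence) is the paper's contradiction argument phrased directly.
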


In the following, for a random dynamical system associated to an infinite countable discrete amenable group action, we will see that the Banach pseudometric
$\overline{D}^{(r)}(\cdot,\cdot)$ equals to the Wely pseudometric $D^{(r)}(\cdot, \cdot)$.

\begin{theorem}\label{DD}
Let $G$ be a countable amenable group, $(\Omega,\mathcal{F},\mathbb{P},G)$ a  MDS and
$$
\mathbf{F}=\{F_{g,\omega}:\mathcal{E}_{\omega}\rightarrow\mathcal{E}_{g\omega}\mid g\in G, \omega\in \Omega\}
$$
is continuous random dynamical system. Then
\begin{equation*}
  D^{(r)}(x, y)=\overline{D}^{(r)}(x, y) \quad  {\text {for any pair  }}\ x,y\in X.
\end{equation*}
\end{theorem}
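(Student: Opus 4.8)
The plan is to strip the statement down to a purely combinatorial identity about bounded nonnegative functions on the amenable group $G$. Fix $x,y\in X$ and set $\phi(t):=\widetilde d(tx,ty)$ for $t\in G$. If there is no $\omega\in\Omega$ with $x,y\in\mathcal E_\omega$, then $\phi\equiv\infty$ and both $D^{(r)}(x,y)$ and $\overline D^{(r)}(x,y)$ equal $+\infty$, so the equality is trivial; hence we may assume $\phi$ takes values in $[0,\mathrm{diam}(X)]$, in particular $\|\phi\|_\infty<\infty$. With this notation,
\[
D^{(r)}(x,y)=\sup_{\mathcal F}\ \limsup_{n\to\infty}\frac1{|F_n|}\sum_{t\in F_n}\phi(t),\qquad
\overline D^{(r)}(x,y)=\inf_{K\in\mathrm{Fin}(G)}\ \sup_{g\in G}\frac1{|K|}\sum_{t\in Kg}\phi(t),
\]
the supremum in the first expression being over all F\o lner sequences $\mathcal F=\{F_n\}$ of $G$. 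It suffices to prove these two quantities coincide, and I would do this by two inequalities.

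For $D^{(r)}(x,y)\le\overline D^{(r)}(x,y)$, I would fix an arbitrary F\o lner sequence $\mathcal F=\{F_n\}$ and an arbitrary $K\in\mathrm{Fin}(G)$, abbreviate $M_K:=\sup_{g\in G}\frac1{|K|}\sum_{t\in Kg}\phi(t)$, and estimate the double sum $S_n:=\sum_{g\in F_n}\sum_{t\in Kg}\phi(t)$ in two ways. Reindexing by $t$ (the number of $g\in F_n$ with $t\in Kg$ is $|F_n\cap K^{-1}t|$) gives $S_n=\sum_{t\in G}\phi(t)\,|F_n\cap K^{-1}t|\ge |K|\sum_{t\in A_n}\phi(t)$, where $A_n:=\{t\in G: K^{-1}t\subseteq F_n\}$ and we use $|K^{-1}t|=|K|$; on the other hand $S_n=\sum_{g\in F_n}|K|\cdot\big(\tfrac1{|K|}\sum_{t\in Kg}\phi(t)\big)\le |K|\,M_K\,|F_n|$. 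Hence $\sum_{t\in A_n}\phi(t)\le M_K|F_n|$. Now $F_n\setminus A_n=\bigcup_{k\in K}(F_n\setminus kF_n)$, so by asymptotic invariance $|F_n\setminus A_n|\le\sum_{k\in K}|F_n\triangle kF_n|=o(|F_n|)$, and therefore $\sum_{t\in F_n}\phi(t)\le\sum_{t\in A_n}\phi(t)+\|\phi\|_\infty|F_n\setminus A_n|\le M_K|F_n|+o(|F_n|)$. Dividing by $|F_n|$ and passing to the $\limsup$ yields $\limsup_n\frac1{|F_n|}\sum_{t\in F_n}\phi(t)\le M_K$; taking the infimum over $K$ and then the supremum over $\mathcal F$ gives the inequality.

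For the reverse inequality $\overline D^{(r)}(x,y)\le D^{(r)}(x,y)$, I would argue by contradiction. Suppose $\overline D^{(r)}(x,y)>D^{(r)}(x,y)+\varepsilon$ for some $\varepsilon>0$; since $\overline D^{(r)}(x,y)$ is an infimum over $K$, for every $K\in\mathrm{Fin}(G)$ there is a $g\in G$ with $\frac1{|K|}\sum_{t\in Kg}\phi(t)>D^{(r)}(x,y)+\varepsilon$. Choose any F\o lner sequence $\{K_n\}$ of $G$ and, for each $n$, an element $g_n\in G$ with $\frac1{|K_n|}\sum_{t\in K_ng_n}\phi(t)>D^{(r)}(x,y)+\varepsilon$, and set $F_n:=K_ng_n$. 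Because $|g(K_ng_n)\triangle(K_ng_n)|=|(gK_n\triangle K_n)g_n|=|gK_n\triangle K_n|$ and $|F_n|=|K_n|$, the right-translated family $\{F_n\}$ is again a F\o lner sequence, and $\frac1{|F_n|}\sum_{t\in F_n}\phi(t)>D^{(r)}(x,y)+\varepsilon$ for every $n$, so $D^{(r)}_{\mathcal F}(x,y)\ge D^{(r)}(x,y)+\varepsilon$, contradicting the definition of $D^{(r)}(x,y)$ as the supremum of $D^{(r)}_{\mathcal F}(x,y)$ over all F\o lner sequences $\mathcal F$.

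I expect the only genuinely delicate point to be the first inequality, specifically the control of the "boundary" set $F_n\setminus A_n$ on which a translated $K$-average need not be dominated by the $F_n$-average; this is exactly where the F\o lner (asymptotic invariance) hypothesis enters, and once the double-counting bookkeeping is arranged correctly the rest is routine. A minor technical point, to be handled according to the precise convention for "F\o lner sequence" adopted in the paper, is that the right-translates $K_ng_n$ must still count as a F\o lner sequence; as noted they have the same F\o lner quotients as $\{K_n\}$, so this is immediate for the usual (asymptotic invariance) definition.
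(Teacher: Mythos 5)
Your proposal is correct and takes essentially the same route as the paper: both inequalities rest on the same double sum $\sum_{g\in F_n}\sum_{t\in Kg}\widetilde{d}(tx,ty)$ for the direction $D^{(r)}\le\overline{D}^{(r)}$ (you reindex it and discard the F{\o}lner-small set $F_n\setminus A_n$, while the paper pigeonholes a single best translate $t'F_n$ and corrects by $|t'F_n\triangle F_n|\cdot\mathrm{diam}(X)$ --- the same boundary estimate in different bookkeeping), and your contradiction argument for $\overline{D}^{(r)}\le D^{(r)}$ via the right-translated F{\o}lner sequence $\{K_ng_n\}$ is exactly the paper's. I see no gaps.
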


\begin{proof}

Let $x,y\in X$. If $x,y$ do not both belong to $\mathcal{E}_{\omega}$ for any $\omega\in \Omega$, $D^{(r)}(x, y)=\overline{D}^{(r)}(x, y)$ is obvious.

For the remaining part, we only need to prove that another situation holds.

Let $x, y\in \mathcal{E}_{\omega}$ for some $\omega\in \Omega$. Firstly, we show that $D^{(r)}(x, y)\leq\overline{D}^{(r)}(x, y)$.

Let $\varepsilon>0$. From the definition of $\overline{D}^{(r)}(x,y)$, there is a nonempty finite subset $K\in {\rm {Fin}}(G)$ such that
$$
\sup_{g\in G}\frac{1}{|K|}\sum_{t\in Kg}\widetilde{d}(tx,ty)<\overline{D}^{(r)}(x,y)+\varepsilon.
$$

Let  $\mathcal{F}=\{F_{n}\}_{n\in \mathbb{N}}$ be a F{\o}lner sequence of $G$. In the following we will show that
$$
\limsup_{n\rightarrow\infty}\frac{1}{|F_{n}|}\sum_{t\in F_{n}}\widetilde{d}(tx,ty)\leq\overline{D}^{(r)}(x, y)+\varepsilon.
$$

Given $g\in G$. For every $h\in F_{n}$, one has

\begin{equation*}
\begin{split}
\frac{1}{|K|}\sum_{s\in Khg}\widetilde{d}(sx,sy)&\leq\sup_{g'\in G}\frac{1}{|K|}\sum_{s\in Kg'}\widetilde{d}(sx,sy)\\
&<\overline{D}^{(r)}(x, y)+\varepsilon.
\end{split}
\end{equation*}
Thus it follows that
$$
\sum_{h\in F_{n}}\sum_{s\in Khg}\widetilde{d}(sx,sy)<|F_{n}||K|(\overline{D}^{(r)}(x,y)+\varepsilon).
$$

We denote by
$$\alpha(h,t)=\widetilde{d}((thg)x,(thg)y)$$ for $h\in F_{n}$ and $t\in K$. Then the above inequality can be re-written as
$$
\sum_{h\in F_{n}}\sum_{t\in K}\alpha(h,t)<|F_{n}||K|(\overline{D}^{(r)}(x, y)+\varepsilon).
$$
It is clear that there is $t'\in K$ such that
$$
\sum_{h\in F_{n}}\alpha(h,t')=\min\left\{\sum_{h\in F_{n}}\alpha(h,t): t\in K\right\}.
$$
Therefore, we get
$$
|K| \sum_{h\in F_{n}}\alpha(h,t')\leq\sum_{t\in K}\sum_{h\in F_{n}}\alpha(h,t)=\sum_{h\in F_{n}}\sum_{t\in K}\alpha(h,t)<|F_{n}||K|(\overline{D}^{(r)}(x, y)+\varepsilon).
$$
which implies
$$
\sum_{s\in t' F_{n} g} \widetilde{d}(sx,sy)=\sum_{h\in F_{n}}\alpha(h,t')<|F_{n}|(\overline{D}^{(r)}(x, y)+\varepsilon).
$$
Note that
\begin{equation*}
  \begin{split}
  \sum_{s\in F_{n} g}\widetilde{d}(sx,sy)& \leq \sum_{s\in t' F_{n} g}\widetilde{d}(sx,sy)+\sum_{s\in t' F_{n} g\triangle F_{n} g}\widetilde{d}(sx,sy) \\
    &<|F_n|(\overline{D}^{(r)}(x, y)+\varepsilon)+|t' F_{n} g\triangle F_{n} g|\cdot {\rm {diam}}(X)\\
    & =|F_n|(\overline{D}^{(r)}(x, y)+\varepsilon)+|t' F_{n}\triangle F_{n}|\cdot {\rm {diam}}(X)
\end{split}
\end{equation*}
where ${\rm {diam}}(X)$ is the diameter of the compact metric space $(X, d)$. Since $\{F_n\}$ is a F{\o}lner sequence, we have
\begin{equation*}
\begin{split}
D^{(r)}_{\mathcal{F}}(x, y) \leq \overline{D}^{(r)}(x, y)+\varepsilon+\limsup_{n\rightarrow\infty}\frac{|t' F_{n}\triangle F_{n}|}{|F_n|} {\rm {diam}}(X)=
\overline{D}^{(r)}(x, y)+\varepsilon.
\end{split}
\end{equation*}
From the arbitrariness of the F{\o}lner sequence $\{F_n\}$ we get
$$
D^{(r)}(x, y)\leq \overline{D}^{(r)}(x, y)+\varepsilon.
$$

The arbitrariness of $\varepsilon$ implies that
\begin{equation*}
  D^{(r)}(x, y)\leq \overline{D}^{(r)}(x, y).
\end{equation*}

Suppose that $D^{(r)}(x_0, y_0)<\overline{D}^{(r)}(x_0, y_0)$ for some $\omega\in \Omega$ and $x_0, y_0\in \mathcal{E}_{\omega}$. In what follows we will obtain a contradiction.

We choose two real number $\eta_1, \eta_2\in\mathbb{R}$ such that
\begin{equation}\label{MP006}
  D^{(r)}(x_0, y_0)<\eta_1<\eta_2<\overline{D}^{(r)}(x_0, y_0).
\end{equation}

Let $\{F_{n}\}_{n\in \mathbb{N}}$ be a F{\o}lner sequence of the amenable group $G$. Note that $F_{n}$ is a nonempty finite subset of $G$ for each $n\in\mathbb{N}$.
From the definition of $\overline{D}^{(r)}(x_{0}, y_{0})$, we have

$$
\sup_{g\in G}\frac{1}{|F_{n}|}\sum_{t\in F_{n}g}\widetilde{d}(tx_{0},ty_{0})\geq\overline{D}^{(r)}(x_{0}, y_{0})>\eta_2.
$$

Thus, for each $n\in\mathbb{N}$, there exists $g_{n}\in G$ such that

$$
\frac{1}{|F_{n}g_{n}|}\sum_{t\in F_{n}g_{n}}\widetilde{d}(tx_{0},ty_{0})>\eta_2.
$$

Set $H_{n}=F_{n}g_{n}$. Since $\{H_{n}\}_{n\in \mathbb{N}}$ is also a (left) F{\o}lner sequence of $G$, we get

$$
\eta_1>D^{(r)}(x_0, y_0)\geq D^{(r)}_{\{H_{n}\}_{n\in\mathbb{N}}}(x_0,y_0)=\limsup_{n\rightarrow\infty}\frac{1}{|H_{n}|}\sum_{g\in H_{n}}\widetilde{d}(gx_{0},gy_{0})
\geq\eta_2.
$$

This is a contradiction. Hence the theorem is proved.
\end{proof}

\begin{remark}
The definition of Banach mean equicontinuity may seem to depend on the particular choice of the metric. However, by compactness of the underlying space $X$ is turns out
to hold for one metric if and only if it holds for any metric (which induces the same topology).
\end{remark}

From the above Theorem \ref{DD}, it follows that the concepts of Banach-and Weyl-mean equicontinuity are equivalent for RDS.

\begin{Corollary}\label{Cor1}
Let $G$ be a countable amenable group, let $X$ be a compact metric space, then RDS $\mathbf{F}$ is Banach-mean equicontinuous if and only if the RDS is Weyl-mean
equicontinuous.
\end{Corollary}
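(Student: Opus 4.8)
The plan is to read this off directly from Theorem~\ref{DD}. Recall that $\mathbf{F}$ is Weyl-mean equicontinuous exactly when for every $\varepsilon>0$ there is a $\delta>0$ such that $D^{(r)}(x,y)<\varepsilon$ for all $x,y$ lying in a common fiber $\mathcal{E}_{\omega}$ with $d(x,y)<\delta$; and $\mathbf{F}$ is Banach-mean equicontinuous under the very same statement with $\overline{D}^{(r)}$ in place of $D^{(r)}$. Theorem~\ref{DD} asserts $D^{(r)}(x,y)=\overline{D}^{(r)}(x,y)$ for every pair $x,y\in X$, including the degenerate case in which $x,y$ do not share a fiber, where both quantities equal $\infty$ by the convention on $\widetilde{d}$. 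Hence the two $\varepsilon$--$\delta$ conditions are literally the same condition, and the equivalence is immediate.

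Concretely, first I would fix $\varepsilon>0$ and invoke the WME hypothesis to obtain $\delta>0$; then for any $x,y\in\mathcal{E}_{\omega}$ with $d(x,y)<\delta$ I would substitute $\overline{D}^{(r)}(x,y)=D^{(r)}(x,y)<\varepsilon$ using Theorem~\ref{DD}, which yields BME with the same $\delta$. The reverse implication is symmetric, simply interchanging the roles of $D^{(r)}$ and $\overline{D}^{(r)}$ and again appealing to Theorem~\ref{DD}.

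The only bookkeeping point is the quantifier over fibers: both definitions constrain only those pairs $x,y$ that belong to $\mathcal{E}_{\omega}$ for some common $\omega\in\Omega$, so one should confirm that the identity in Theorem~\ref{DD} is available for exactly this class of pairs — it is, since the theorem is stated for all pairs in $X$. I do not expect any genuine obstacle: the mathematical substance lives entirely in Theorem~\ref{DD}, and the corollary is a one-line unwinding of the two definitions.
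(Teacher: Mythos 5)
Your proposal is correct and matches the paper exactly: the paper offers no separate argument for this corollary, simply noting that it follows from Theorem~\ref{DD} by substituting $\overline{D}^{(r)}=D^{(r)}$ into the two $\varepsilon$--$\delta$ definitions, which is precisely your one-line unwinding. Your extra remark about the quantifier over common fibers is a harmless (and accurate) clarification.
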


When studying dynamical systems with discrete spectrum, Fomin \cite{F} introduced a notion called stable in the mean in the sense of Lyapunov or simply mean-L-stable.
Fomin proved that if a minimal system is mean-L-stable then it is uniquely ergodic. Li, Tu and Ye \cite{LTY1} showed that every ergodic invariant measure on a mean-L-stable
system has discrete spectrum. Next, we will introduce mean-L-stable of a RDS for an amenable group action.

\begin{definition}
A continuous random dynamical system $\mathbf{F}=\{F_{g,\omega}:\mathcal{E}_{\omega}\rightarrow\mathcal{E}_{g\omega}\mid g\in G, \omega\in \Omega\}$ is called
{\it {stable in the mean in the sense of Lyapunov}} or simply {\it {mean-L-stable}} if for every $\varepsilon>0$ there is a positive real number $\delta>0$ such that
whenever $x,y\in \mathcal{E}_{\omega}$ for some $\omega\in \Omega$ with $d(x,y)<\delta$ implies
$$\widetilde{d}(gx,gy)<\varepsilon$$
for all $g\in G$ except a set of upper Banach density less than $\varepsilon$.
\end{definition}

\begin{lemma}\label{lem2}
Suppose that $G$ is an amenable group. Then the continuous random dynamical system
$\mathbf{F}=\{F_{g,\omega}:\mathcal{E}_{\omega}\rightarrow\mathcal{E}_{g\omega}\mid g\in G, \omega\in \Omega\}$
is {\it{BME}} if and only if it is mean-L-stable.
\end{lemma}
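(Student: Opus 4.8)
The plan is to prove each implication by translating between the Banach pseudometric $\overline{D}^{(r)}(x,y)$ and the upper Banach density of the ``bad'' set $\{t\in G:\widetilde d(tx,ty)\ge\varepsilon\}$, exploiting that $\overline{D}^{(r)}$ and $BD^{*}$ carry the same $\inf_{K}\sup_{g}$ shape. Concretely, I would first record (or quote, since it is standard for countable amenable $G$) the combinatorial description
\[
BD^{*}(E)=\inf_{K\in{\rm Fin}(G)}\ \sup_{g\in G}\frac{|E\cap Kg|}{|K|},
\]
which refines the $\limsup_{F}$ form of the upper Banach density recalled in Section~2. If $X$ is a single point the lemma is vacuous, so assume $M:={\rm diam}(X)>0$.

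For {\rm BME} $\Rightarrow$ mean-L-stable I would use a Chebyshev-type estimate. Fix $\varepsilon>0$ and apply {\rm BME} with $\varepsilon^{2}$ to obtain $\delta>0$ such that $\overline{D}^{(r)}(x,y)<\varepsilon^{2}$ whenever $x,y\in\mathcal{E}_{\omega}$ for some $\omega$ and $d(x,y)<\delta$. For such $x,y$ pick $K\in{\rm Fin}(G)$ with $\sup_{g}\frac{1}{|K|}\sum_{t\in Kg}\widetilde d(tx,ty)<\varepsilon^{2}$ and set $E=\{t\in G:\widetilde d(tx,ty)\ge\varepsilon\}$. Since $\sum_{t\in Kg}\widetilde d(tx,ty)\ge\varepsilon\,|E\cap Kg|$ for every $g$, one gets $\frac{|E\cap Kg|}{|K|}<\varepsilon$ for all $g$, hence $BD^{*}(E)\le\sup_{g}\frac{|E\cap Kg|}{|K|}<\varepsilon$, which is exactly mean-L-stability.

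For the converse I would split each ``window'' $Kg$ according to the bad set. Fix $\varepsilon>0$, set $\varepsilon'=\frac{\varepsilon}{2(M+1)}$, and apply mean-L-stability with $\varepsilon'$ to get $\delta>0$ such that $d(x,y)<\delta$ forces $E:=\{t\in G:\widetilde d(tx,ty)\ge\varepsilon'\}$ to satisfy $BD^{*}(E)<\varepsilon'$. By the combinatorial form of $BD^{*}$, choose $K\in{\rm Fin}(G)$ with $\sup_{g}\frac{|E\cap Kg|}{|K|}<\varepsilon'$. Bounding $\widetilde d$ by $M$ on $Kg\cap E$ and by $\varepsilon'$ on $Kg\setminus E$ yields, for every $g$,
\[
\frac{1}{|K|}\sum_{t\in Kg}\widetilde d(tx,ty)\le M\cdot\frac{|E\cap Kg|}{|K|}+\varepsilon'<(M+1)\varepsilon'=\tfrac{\varepsilon}{2}<\varepsilon,
\]
so $\overline{D}^{(r)}(x,y)\le\sup_{g}\frac{1}{|K|}\sum_{t\in Kg}\widetilde d(tx,ty)<\varepsilon$, i.e.\ the system is {\rm BME}.

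I expect the only genuinely delicate point to be the passage from the $\limsup_{F}$ definition of $BD^{*}$ to the $\inf_{K}\sup_{g}$ form used above; this is where amenability of $G$ enters, and it should be either cited or proved by a short ``almost-invariant window'' argument. Everything else is constant bookkeeping. Note that Theorem~\ref{DD} and Corollary~\ref{Cor1} are not needed, since the argument runs directly with $\overline{D}^{(r)}$; alternatively, one could recast the {\rm BME} side via the Weyl pseudometric $D^{(r)}$ using Theorem~\ref{DD} and work along F{\o}lner sequences instead.
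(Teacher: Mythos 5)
Your argument is correct, and its engine is the same as the paper's --- a Chebyshev/Markov estimate for the direction BME $\Rightarrow$ mean-L-stable, and a split of the averaging window along the bad set $E=\{t\in G:\widetilde d(tx,ty)\ge\varepsilon'\}$ for the converse --- but you run it through a different presentation of the upper Banach density, and this changes which auxiliary facts are needed. The paper first invokes Corollary \ref{Cor1} (hence Theorem \ref{DD}) to replace $\overline{D}^{(r)}$ by the Weyl pseudometric $D^{(r)}=\sup_{\mathcal F}D^{(r)}_{\mathcal F}$, and then does both estimates along F{\o}lner sequences, implicitly using the identity
\begin{equation*}
BD^{*}(E)=\sup_{\{F_n\}}\ \limsup_{n\rightarrow\infty}\frac{|E\cap F_n|}{|F_n|},
\end{equation*}
which is closest to the definition of $BD^{*}$ recalled in Section 2. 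You instead stay with the Banach pseudometric $\overline{D}^{(r)}$ and the translated windows $Kg$, using the combinatorial form $BD^{*}(E)=\inf_{K}\sup_{g}|E\cap Kg|/|K|$. What your route buys is independence from Theorem \ref{DD}: you never need the averaging-over-translates argument that proves $D^{(r)}=\overline{D}^{(r)}$, and the two implications become pure constant bookkeeping inside a single window. What it costs is exactly the point you flag: the $\inf_K\sup_g$ characterization of $BD^{*}$ must be cited or proved (the paper's version has the symmetric burden of justifying the $\sup\limsup$ characterization, so neither proof is strictly more self-contained on this score). Your quantitative details check out: the strict inequality $\sup_g\frac{1}{|K|}\sum_{t\in Kg}\widetilde d(tx,ty)<\varepsilon^2$ does give $\sup_g|E\cap Kg|/|K|<\varepsilon$ uniformly, and the choice $\varepsilon'=\varepsilon/(2(M+1))$ matches the paper's $\eta<\varepsilon/(2(1+\mathrm{diam}(X)))$; note only that this choice already handles $\mathrm{diam}(X)=0$, so your preliminary reduction to $M>0$ is unnecessary (though harmless).
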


\begin{proof}
Assume that the continuous random dynamical system $\mathbf{F}=\{F_{g,\omega}:\mathcal{E}_{\omega}\rightarrow\mathcal{E}_{g\omega}\mid g\in G, \omega\in \Omega\}$ is
{\it{BME}}. By Corollary  \ref{Cor1}, Let $\varepsilon>0$, there is a positive real number $\delta>0$ such that whenever $x,y\in\mathcal{E}_{\omega}$ for some
$\omega\in \Omega$ with $d(x,y)<\delta$,
$$
\sup_{\{F_{n}\}}\limsup_{n\rightarrow\infty}\frac{1}{|F_{n}|}\sum_{g\in F_{n}}\widetilde{d}(gx,gy)<\varepsilon^{2}
$$
where the sumpremum is taken over all F{\o}lner sequence $\{F_{n}\}$ of $G$.

Given two points $x,y\in \mathcal{E}_{\omega}$ for some $\omega\in \Omega$ with $d(x,y)<\delta$. Let
$$E=\left\{g\in G:\widetilde{d}(gx,gy)\geq\varepsilon\right\}.$$
It follows that

\begin{equation*}
\begin{split}
\varepsilon \cdot {\rm {BD}}^{*}(E)&=\sup_{\{F_{n}\}}\limsup_{n\rightarrow\infty}\frac{\varepsilon|F_{n}\cap E|}{|F_{n}|}\\
&\leq\sup_{\{F_{n}\}}\limsup_{n\rightarrow\infty}\frac{1}{|F_{n}|}\sum_{g\in F_{n}}\widetilde{d}(gx,gy)\\
&<\varepsilon^{2}
\end{split}
\end{equation*}
and then BD$^{*}(E)<\varepsilon$, which implies the continuous random dynamical system $\mathbf{F}$ is mean-L-stable.

Conversely, assume that the continuous random dynamical system
$$\mathbf{F}=\{F_{g,\omega}:\mathcal{E}_{\omega}\rightarrow\mathcal{E}_{g\omega}\mid g\in G, \omega\in \Omega\}$$
is mean-L-stable. Fix a positive number $\varepsilon>0$ and choose a positive real number $\eta<\frac{\varepsilon}{2(1+{\rm
{diam}}(X))}$, there is a positive real number $\delta>0$ such that whenever $x,y\in X$ with $x,y\in \mathcal{E}_{\omega}$ for
some $\omega\in \Omega$ and $d(x,y)<\delta$ implies
$$\widetilde{d}(gx,gy)<\eta$$
for all $g\in G$ except a set of upper Banach density less than $\eta$.

Given two points $x,y\in \mathcal{E}_{\omega}$ for some $\omega\in\Omega$ and $d(x,y)<\delta$. Let $\{F_{n}\}_{n\in \mathbb{N}}$ be a F{\o}lner sequence of $G$ and
$$E=\left\{g\in G:\widetilde{d}(gx,gy)\geq\eta\right\}.$$
It follows that BD$^{*}(E)<\eta$ and

\begin{equation*}
\begin{split}
&D^{(r)}_{\{F_{n}\}_{n\in \mathbb{N}}}(x, y)\\
&\leq\limsup_{n\rightarrow\infty}\frac{1}{|F_{n}|}\left[\sum_{g\in F_{n}\setminus E}\widetilde{d}(gx,gy)+\sum_{g\in E\cap F_{n}}\widetilde{d}(gx,gy)\right]\\
  & \leq \limsup_{n\rightarrow\infty}\frac{1}{|F_{n}|}\left[\sum_{g\in F_{n}\setminus E}\widetilde{d}(gx,gy)+|E\cap F_{n}|\cdot{\rm {diam}(X)}\right]\\
  & \leq \limsup_{n\rightarrow\infty}\frac{1}{|F_{n}|}\left[\sum_{g\in F_{n}\setminus E}\eta+|E\cap F_{n}|\cdot{\rm {diam}(X)}\right]\\
  &\leq\limsup_{n\rightarrow\infty}\frac{1}{|F_{n}|}|F_{n}|\eta+\limsup_{n\rightarrow\infty}\frac{|E\cap F_{n}|}{|F_{n}|}\rm {diam}(X)\\
  &\leq \eta+{\rm {diam}(X)}\cdot {\rm {BD}}^{*}(E)<(1+{\rm {diam}(X)})\eta<\frac{\varepsilon}{2},
\end{split}
\end{equation*}
which implies the continuous random dynamical system $\mathbf{F}=\{F_{g,\omega}:\mathcal{E}_{\omega}\rightarrow\mathcal{E}_{g\omega}\mid g\in G, \omega\in \Omega\}$
is {\it{BME}}.
\end{proof}

According to Corollary \ref{Cor1} and Lemma \ref{lem2}, we can get the following conclution:
\begin{theorem}
Let $G$ be a countable amenable group. $\mathbf{F}$ be a RDS over $(\Omega,\mathcal{F},\mathbb{P},G)$. Then the following three statements are equivalent:
\begin{enumerate}
\item RDS $\mathbf{F}$ is WME;

\item RDS $\mathbf{F}$ is BME;

\item RDS $\mathbf{F}$ is mean-L-stable.
\end{enumerate}

\end{theorem}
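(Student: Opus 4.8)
The plan is simply to assemble the three equivalences from the two results already in hand, using transitivity of logical equivalence. Corollary \ref{Cor1} states that the RDS $\mathbf{F}$ is Banach-mean equicontinuous if and only if it is Weyl-mean equicontinuous; this is precisely the equivalence (1)$\Leftrightarrow$(2), and it is itself a consequence of Theorem \ref{DD}, where the substantive argument lives — the identity $D^{(r)}(x,y)=\overline{D}^{(r)}(x,y)$ obtained by the Følner-averaging comparison and the minimization-over-$K$ trick. Lemma \ref{lem2} states that $\mathbf{F}$ is BME if and only if it is mean-L-stable, which is exactly (2)$\Leftrightarrow$(3).

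Given those two biconditionals, the theorem follows at once: from (1)$\Leftrightarrow$(2) and (2)$\Leftrightarrow$(3) we get (1)$\Leftrightarrow$(3), so each of the three statements implies and is implied by each of the others. I would write this out as a single displayed chain, say (1)$\Leftrightarrow$(2)$\Leftrightarrow$(3), and conclude. No induction, no estimates, no new construction are needed.

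There is essentially no obstacle, since the statement is a packaging corollary of earlier work. The only point worth a line of care is to check that the standing hypotheses align: both Corollary \ref{Cor1} and Lemma \ref{lem2} assume $G$ is a countable amenable group, $X$ a compact metric space, and $\mathbf{F}$ a continuous bundle RDS over $(\Omega,\mathcal{F},\mathbb{P},G)$, which are exactly the hypotheses of the theorem; hence no extra assumption is smuggled in, and the deduction is legitimate as stated.
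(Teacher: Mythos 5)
Your proposal is correct and matches the paper exactly: the theorem is stated there as an immediate consequence of Corollary \ref{Cor1} (WME $\Leftrightarrow$ BME, itself derived from Theorem \ref{DD}) and Lemma \ref{lem2} (BME $\Leftrightarrow$ mean-L-stable), chained by transitivity. Your extra remark about checking that the standing hypotheses align is a reasonable bit of care but introduces nothing beyond what the paper does.
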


In determination dynamical systems, we know that if every point in $X$ is an equicontinuous point then by the compactness of $X$ the dynamical system $(X, T )$ is
equicontinuous. $(X, T )$ is mean equicontinuous if and only if every point in $X$ is mean equicontinuous \cite{LTY1}. An amenable action $G\curvearrowright X$ is Banach-mean
equicontinuous if and only if every point in $X$ is a Banach-mean equicontinuous point \cite{ZHL}. At the end of this secion, we will introduce the Weyl-mean equicontinuous
point for RDS $\mathbf{F}$.

We call a point $x\in X$ is  {\it {Besicovitch-$\mathcal{F}$-mean equicontinuous}} if for every $\varepsilon>0$, there exists $\delta>0$ such that
$D^{(r)}_{\mathcal{F}}(x, y)<\varepsilon,$ for all $x,y\in \mathcal{E}_{\omega}$ for some $\omega\in \Omega$ with $d(x,y)<\delta$.

Let $x\in X$ is {\it {Weyl-mean equicontinuous}} if for every $\varepsilon>0$, there exists $\delta>0$ such that whenever
$y\in \mathcal{E}_{\omega}$ for some $\omega\in\Omega$, with $d(x,y)<\delta$, we have
\begin{equation}\label{wey2}
  D^{(r)}(x, y)<\varepsilon.
\end{equation}

Let $\omega\in\Omega$, set $B_{\omega}(x,\delta)=B(x,\delta)\cap\mathcal{E}_{\omega}$ where $B(x,\delta)$ is an open ball centered at $x$ of radius $\delta$. From the definition of RDS, it can be observed that there is
a set $A$ with $A\subseteq \Omega$ and $\mathbb{P}(A)=1$ such that $\emptyset\neq\mathcal{E}_{\omega}\subseteq X$ is a compact subset and $F_{g,\omega}$ is a continuous map
for each $g\in G$ and $\omega\in A$.

If $\Omega$ finite, by the compactness of $\mathcal{E}_{\omega}$ for $\omega\in\Omega$, $\mathbf{F}$ is Weyl-mean equicontinuous if and only if every point in
$\bigcup_{\omega\in A}\mathcal{E}_{\omega}$ is Weyl mean equicontinuous.  We say the continuous random dynamical system
$\mathbf{F}=\{F_{g,\omega}:\mathcal{E}_{\omega}\rightarrow\mathcal{E}_{g\omega}\mid g\in G, \omega\in \Omega\}$ is {\it{almost Weyl-mean equicontinuous}} if
the dynamic system  has at least one Weyl-mean equicontinuous point.

Let $\mathbb{E}$ denote the set of all Weyl-mean equicontinuous points. Let $\omega\in\Omega$, for every $\varepsilon>0$,
\begin{equation}\label{xxx}
 \mathbb{E}_{\omega,\varepsilon}=\bigg\{x\in \mathcal{E}_{\omega}: \exists \, \delta>0,\, \forall\, y, z\in B_{\omega}(x,\delta),  D^{(r)}(y, z)<\varepsilon \bigg\}
\end{equation}

For the Weyl-mean equicontinuous points we have the following proposition.
\begin{prop}\label{g}
Let $G$ be a countable amenable group, $\mathbf{F}=\{F_{g,\omega}:\mathcal{E}_{\omega}\rightarrow\mathcal{E}_{g\omega}\mid g\in G, \omega\in \Omega\}$ be a continuous random
dynamical system and $\varepsilon>0$. Then $\mathbb{E}_{\omega,\varepsilon}$ is open. Moreover, if $\Omega$ is finite set, $\mathbb{E}=\bigcap_{m=1}^{\infty}\bigcup_{\omega\in A}\mathbb{E}_{\omega,\frac{1}{m}}$ is a $G_{\delta}$
subset of $X$.
\end{prop}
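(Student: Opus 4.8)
I would treat the two assertions separately; the openness of $\mathbb{E}_{\omega,\varepsilon}$ is soft, whereas the description of $\mathbb{E}$ rests on the pseudometric property of $D^{(r)}$ together with the finiteness of $\Omega$. For openness, fix $x\in\mathbb{E}_{\omega,\varepsilon}$ with a witnessing radius $\delta>0$. If $x'\in B_{\omega}(x,\delta/2)$ then $B_{\omega}(x',\delta/2)\subseteq B_{\omega}(x,\delta)$, so $D^{(r)}(y,z)<\varepsilon$ for all $y,z\in B_{\omega}(x',\delta/2)$, i.e. $x'\in\mathbb{E}_{\omega,\varepsilon}$; hence $B_{\omega}(x,\delta/2)\subseteq\mathbb{E}_{\omega,\varepsilon}$ and $\mathbb{E}_{\omega,\varepsilon}$ is open in $\mathcal{E}_{\omega}$. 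I also record the trivial monotonicity $\mathbb{E}_{\omega,1/m}\subseteq\mathbb{E}_{\omega,1/m'}$ whenever $m\ge m'$.

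The inclusion $\mathbb{E}\subseteq\bigcap_{m\ge1}\bigcup_{\omega\in A}\mathbb{E}_{\omega,1/m}$ uses that $D^{(r)}$ is a pseudometric: the triangle inequality descends from $d$ to $\widetilde{d}(tx,ty)$ and is preserved by averaging over $F_{n}$, by the $\limsup$, and by the supremum over F{\o}lner sequences. Given $x\in\mathbb{E}$, it lies in some fiber $\mathcal{E}_{\omega_{0}}$ with $\omega_{0}\in A$; for each $m$, applying Weyl-mean equicontinuity of $x$ with tolerance $1/(2m)$ produces $\delta>0$ such that $D^{(r)}(x,y)<1/(2m)$ for every $y\in B_{\omega_{0}}(x,\delta)$, and then $D^{(r)}(y,z)\le D^{(r)}(y,x)+D^{(r)}(x,z)<1/m$ for all $y,z\in B_{\omega_{0}}(x,\delta)$, that is $x\in\mathbb{E}_{\omega_{0},1/m}$. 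Since $m$ is arbitrary this gives the inclusion.

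For the reverse inclusion, suppose $x\in\bigcap_{m\ge1}\bigcup_{\omega\in A}\mathbb{E}_{\omega,1/m}$. As $A$ is finite, by the pigeonhole principle there is a single $\omega^{*}\in A$ with $x\in\mathbb{E}_{\omega^{*},1/m}$ for infinitely many $m$, hence for every $m$ by the monotonicity above. Choosing $z=x$ in the definition of $\mathbb{E}_{\omega^{*},1/m}$ yields radii $\delta_{m}>0$ with $D^{(r)}(x,y)<1/m$ for all $y\in B_{\omega^{*}}(x,\delta_{m})$. It remains to promote this to Weyl-mean equicontinuity of $x$, i.e. to control $D^{(r)}(x,y)$ for \emph{every} $y$ close to $x$, including those in fibers other than $\mathcal{E}_{\omega^{*}}$; this is where the finiteness of $\Omega$ is used a second time, via compactness: the finitely many fibers $\mathcal{E}_{\omega}$ not containing $x$ lie at a positive distance from $x$, so after shrinking $\delta_{m}$ below the minimum of these distances the only nearby points that meet a fiber are points of $\mathcal{E}_{\omega^{*}}$ (at least when the fibers $\mathcal{E}_{\omega}$, $\omega\in A$, are pairwise disjoint), and the estimate applies; given $\varepsilon>0$ one then takes $m$ with $1/m<\varepsilon$ and the corresponding shrunken radius. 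I expect this reconciliation of the fiberwise bound with the global requirement to be the main obstacle.

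Finally, the $G_{\delta}$ property follows formally. By the first part, $\mathbb{E}_{\omega,1/m}=U_{\omega,m}\cap\mathcal{E}_{\omega}$ for some $U_{\omega,m}$ open in $X$; since $\mathcal{E}_{\omega}$ is compact it is closed in the metric space $X$, hence a $G_{\delta}$, so $\mathbb{E}_{\omega,1/m}$ is a $G_{\delta}$ subset of $X$. As $A$ is finite, $\bigcup_{\omega\in A}\mathbb{E}_{\omega,1/m}$ is a finite union of $G_{\delta}$ sets, hence itself $G_{\delta}$, and therefore $\mathbb{E}=\bigcap_{m\ge1}\bigcup_{\omega\in A}\mathbb{E}_{\omega,1/m}$ is a countable intersection of $G_{\delta}$ sets, i.e. a $G_{\delta}$ subset of $X$.
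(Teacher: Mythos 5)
Your argument follows the paper's own proof essentially step for step: the same sliding-ball argument for openness of $\mathbb{E}_{\omega,\varepsilon}$, and the same use of the triangle inequality for $D^{(r)}$ with tolerance $1/(2m)$ to show $\mathbb{E}\subseteq\bigcap_{m}\bigcup_{\omega\in A}\mathbb{E}_{\omega,1/m}$. The extra points you supply---the pigeonhole over the finite set $A$ with the monotonicity in $m$, the reconciliation of the fiberwise bound with the global requirement in the reverse inclusion, and the observation that a countable intersection of finite unions of $G_{\delta}$ sets is $G_{\delta}$---are all passed over in silence by the paper (its reverse inclusion is just the sentence ``then clearly $x\in\mathbb{E}_{\omega,1/m}$ for all $m$ and for some $\omega$, so $x\in\mathbb{E}$''), so the cross-fiber obstacle you flag is genuine but is left unresolved in the source as well.
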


\begin{proof}
Let $\omega\in\Omega$, $\varepsilon>0$ and $x\in\mathbb{E}_{\omega,\varepsilon}$. Choose $\delta>0$ satisfying the condition from the definition of
$\mathbb{E}_{\omega,\varepsilon}$ for $x$. Fix $y\in B_{\omega}(x, \delta/2)$. If $z,w\in B_{\omega}(y, \delta/2)$, then $z,w\in B_{\omega}(x,\delta)$, so
$D^{(r)}(z, w)<\varepsilon$. This shows that $B_{\omega}(x, \delta/2)\subseteq \mathbb{E}_{\omega,\varepsilon}$ and hence $\mathbb{E}_{\omega,\varepsilon}$ is open.

If $x\in\bigcap_{m=1}^{\infty}\bigcup\limits_{\omega\in A}\mathbb{E}_{\omega, \frac{1}{m}}$, then clearly $x\in \mathbb{E}_{\omega,\frac{1}{m}}$
for all $m$ and for some $\omega\in\Omega$, so $x\in E$.

Conversely, if $x\in \mathbb{E}$, $m\geq1$, there exists $\delta>0$ such that $D^{(r)}(x, y)<1/2m$ for all $y\in B(x,\delta)$. Let $\omega\in A$. If $y,z\in
B_{\omega}(x,\delta)$, then
\begin{equation*}
D^{(r)}(y, z)\leq D^{(r)}(y, x)+D^{(r)}(x, z)<\frac{1}{m}.
\end{equation*}
Thus $x\in \mathbb{E}_{\omega,\frac{1}{m}}$. Therefore we get $\mathbb{E}=\bigcap_{m=1}^{\infty}\bigcup\limits_{\omega\in A}\mathbb{E}_{\omega,\frac{1}{m}}$.
Hence the proof is completed.
\end{proof}

\begin{fact}
Let $X$ be a compact metric space, $G$ be a group and $(\Omega,\mathcal{F},\mathbb{P},G)$ is a trivial MDS. The above results are consistent with the proposition 4.6 in
the paper \cite{ZHL}.
\end{fact}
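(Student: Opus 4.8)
The plan is to show that when $\Omega$ is a singleton the entire apparatus built above collapses onto the deterministic setting of \cite{ZHL}, so that Proposition \ref{g} becomes verbatim Proposition 4.6 of that paper. Write $\Omega=\{\omega_0\}$, so that $\mathbb{P}(\{\omega_0\})=1$ and the full-measure set $A$ appearing just before Proposition \ref{g} may be taken to be $\Omega$ itself. By the identification recalled in Section 2, a continuous bundle RDS over a trivial MDS is nothing but a TDS $(K,G)$, where $K=\mathcal{E}_{\omega_0}$ is a non-empty compact subset of $X$ and the homeomorphisms $F_g:=F_{g,\omega_0}$ satisfy $F_{g_2}\circ F_{g_1}=F_{g_2g_1}$ and $F_{e_G}=\mathrm{id}_K$. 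The first step is therefore to check that each randomized object defined above reduces to its deterministic counterpart on $K$.

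First I would unwind the pseudometric. Since there is only one fibre, the supremum defining $\widetilde{d}(tx,ty)=\sup_{\omega\in\Omega}d(F_{t,\omega}x,F_{t,\omega}y)$ is attained at $\omega_0$, giving $\widetilde{d}(tx,ty)=d(F_t x,F_t y)=d(tx,ty)$ for $x,y\in K$. Feeding this into the Besicovitch averages shows $D^{(r)}_{\mathcal{F}}(x,y)=D_{\mathcal{F}}(x,y)$ for every F{\o}lner sequence $\mathcal{F}$, and taking the supremum over all F{\o}lner sequences yields $D^{(r)}(x,y)=D(x,y)$, where $D$ is precisely the Weyl pseudometric of the action $G\curvearrowright K$ used in \cite{ZHL}. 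Consequently the set $\mathbb{E}_{\omega_0,\varepsilon}$ of \eqref{xxx} is exactly the set of $\varepsilon$-Weyl-mean equicontinuous points of $(K,G)$, and a point $x\in K$ is Weyl-mean equicontinuous for $\mathbf{F}$ in the sense of \eqref{wey2} if and only if it is Weyl-mean equicontinuous for the action $G\curvearrowright K$.

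Next I would collapse the indexing over $\Omega$. Because $A=\{\omega_0\}$ is a single point, the union $\bigcup_{\omega\in A}\mathbb{E}_{\omega,\frac{1}{m}}$ equals the single set $\mathbb{E}_{\omega_0,\frac{1}{m}}$, so the $G_\delta$ description $\mathbb{E}=\bigcap_{m=1}^{\infty}\bigcup_{\omega\in A}\mathbb{E}_{\omega,\frac{1}{m}}$ furnished by Proposition \ref{g} becomes $\mathbb{E}=\bigcap_{m=1}^{\infty}\mathbb{E}_{\omega_0,\frac{1}{m}}$. Together with the relative openness of each $\mathbb{E}_{\omega_0,\varepsilon}$, this is exactly the content of Proposition 4.6 of \cite{ZHL} applied to the TDS $(K,G)$; hence Proposition \ref{g} specializes to, and agrees with, that result.

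The main obstacle is not analytic but one of bookkeeping: I must be careful that the ambient space in \eqref{xxx} has quietly changed from $X$ to the compact invariant subset $K=\mathcal{E}_{\omega_0}$. I would resolve this by observing that $(K,d|_K)$ is itself a compact metric space on which $G$ acts by homeomorphisms, so Proposition 4.6 of \cite{ZHL} applies directly with $K$ in the role of the phase space; the restriction of the metric does not alter the openness statement, once one recalls that the relative balls $B_{\omega_0}(x,\delta)=B(x,\delta)\cap K$ already built into the definition of $\mathbb{E}_{\omega_0,\varepsilon}$ are precisely the open balls of $(K,d|_K)$.
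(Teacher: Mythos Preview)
The paper states this Fact without proof, so there is no argument to compare against; your proposal is a correct and careful unpacking of why the random definitions collapse to the deterministic ones of \cite{ZHL} when $\Omega$ is a singleton. Your identification $\widetilde{d}(tx,ty)=d(tx,ty)$, the resulting equality $D^{(r)}=D$, and the reduction of $\bigcup_{\omega\in A}\mathbb{E}_{\omega,1/m}$ to the single set $\mathbb{E}_{\omega_0,1/m}$ are exactly the points one needs, and your remark that the ambient space quietly becomes $K=\mathcal{E}_{\omega_0}$ (with the relative balls $B_{\omega_0}(x,\delta)$ already the correct open balls in $K$) resolves the only subtlety.
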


Let $X$ be a compact metric space. Recall that a subset set of $X$ is called {\it {residual}} if it contains the intersection of a countable collection of dense open sets.
By the Baire category theorem, a residual set is also dense in $X$. If $\Omega$ is a singleton, we can draw the following conclusion:

\begin{prop}
Let $G$ be a countable amenable group, $\Omega$ is a singleton. let $X$ be a compact metric space. Let $G\curvearrowright \mathcal{E}$ be a transitive action. Then there
exist a compact subset $K\subseteq X$ satisfies
\begin{enumerate}

\item $G\curvearrowright K$ be a transitive action;

\item The set of Weyl-mean equieontimlous points $\mathbb{E}$ is either empty or residual. If, in addition, the action $G\curvearrowright K$ is almost Weyl-mean
      equicontinuous, then every transitive point is Weyl-mean equicontinuous;

\item If the action $G\curvearrowright \mathcal{E}$ is minimal i.e. the action $G\curvearrowright K$ is minimal and almost Weyl-mean equicontinuous, then
      $G\curvearrowright K$ is Weyl-mean equicontinuous.
\end{enumerate}

\end{prop}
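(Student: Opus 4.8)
The plan is to remove the randomness and then argue exactly as for Proposition~4.6 of \cite{ZHL}. Since $\Omega$ is a singleton, say $\Omega=\{\omega_0\}$, the $\mathbb{P}$-a.e.\ compactness built into the definition of a continuous bundle RDS holds at $\omega_0$, so $K:=\mathcal{E}_{\omega_0}$ is a nonempty compact subset of $X$; under the obvious identification $\mathcal{E}\cong K$ the skew product transformation becomes the topological $G$-action $(K,G)$ determined by $\{F_{g,\omega_0}\}_{g\in G}$, and transitivity of $G\curvearrowright\mathcal{E}$ passes to $G\curvearrowright K$, which is~(1). Because the supremum defining $\widetilde d$ ranges over the one-point set $\Omega$, one has $\widetilde d(gx,gy)=d(F_{g,\omega_0}x,F_{g,\omega_0}y)$, so the Weyl pseudometric $D^{(r)}$ of $\mathbf{F}$ coincides with the Weyl pseudometric $D$ of $(K,G)$, and ``$x$ is a WME point of $\mathbf{F}$'' means ``$x$ is a Weyl-mean equicontinuous point of $(K,G)$''; I identify the two from now on. The one structural fact I would record first is the $G$-invariance $D^{(r)}(gx,gy)=D^{(r)}(x,y)$ ($g\in G$): indeed $D^{(r)}_{\mathcal F}(gx,gy)=D^{(r)}_{\{F_ng\}}(x,y)$, and $\{F_ng\}_{n\in\mathbb{N}}$ is a F{\o}lner sequence whenever $\{F_n\}_{n\in\mathbb{N}}$ is, so taking the supremum over all F{\o}lner sequences is insensitive to applying $g$; using $g^{-1}$ gives the reverse inequality. (One could instead read this off the translation-symmetric formula for $\overline{D}^{(r)}$ together with Theorem~\ref{DD}.)

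For (2), Proposition~\ref{g} already gives that $\mathbb{E}=\bigcap_{m\ge1}\bigcup_{\omega\in A}\mathbb{E}_{\omega,1/m}$ is $G_\delta$, and with $\Omega$ a singleton this reads $\mathbb{E}=\bigcap_{m\ge1}\mathbb{E}_{\omega_0,1/m}$ with each $\mathbb{E}_{\omega_0,1/m}$ open. Suppose $\mathbb{E}\ne\emptyset$ and fix $x_0\in\mathbb{E}$, so that for each $m$ there is $\delta_m>0$ with $D^{(r)}(y,z)<1/m$ whenever $y,z\in B(x_0,\delta_m)\cap K$. Let $x\in K$ be any transitive point. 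For each $m$, transitivity gives $g_m\in G$ with $g_mx\in B(x_0,\delta_m/2)\cap K$, and continuity of the homeomorphism $y\mapsto g_my$ yields $\rho_m>0$ with $g_m\bigl(B(x,\rho_m)\cap K\bigr)\subseteq B(x_0,\delta_m)\cap K$; then for $y,z\in B(x,\rho_m)\cap K$ the $G$-invariance of $D^{(r)}$ gives $D^{(r)}(y,z)=D^{(r)}(g_my,g_mz)<1/m$, i.e.\ $x\in\mathbb{E}_{\omega_0,1/m}$. As $m$ is arbitrary, $x\in\mathbb{E}$; this is exactly the second assertion of~(2). For the first assertion, recall that a topologically transitive action on a compact metric space has a residual set $\mathrm{Trans}$ of transitive points (the standard Baire argument: for a countable base $\{U_i\}$ one has $\mathrm{Trans}=\bigcap_i\bigcup_{g\in G}g^{-1}U_i$, and each union is open and dense by transitivity). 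Hence either $\mathbb{E}=\emptyset$, or $\mathbb{E}\ne\emptyset$ and then $\mathrm{Trans}\subseteq\mathbb{E}$ by the above, so the $G_\delta$ set $\mathbb{E}$, containing a residual set, is itself residual.

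Finally, for (3), assume $G\curvearrowright K$ is minimal and almost WME. Minimality makes every point of $K$ transitive, so by~(2) every point of $K$ lies in $\mathbb{E}$, i.e.\ $\mathbb{E}=K$. Since $\Omega$ is finite, the equivalence recorded just before Proposition~\ref{g} --- that $\mathbf{F}$ is WME iff every point of $\bigcup_{\omega\in A}\mathcal{E}_\omega$ is WME, which itself follows from the compactness of $K$ by a Lebesgue-number argument --- shows $\mathbf{F}$ is WME, and under the identification of the first paragraph this says $G\curvearrowright K$ is WME. The only genuinely substantive point in this outline is the $G$-invariance of the Weyl pseudometric and its interplay with topological transitivity in the middle paragraph; the rest is the passage between the RDS and TDS formalisms and routine Baire-category bookkeeping.
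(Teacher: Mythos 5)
Your argument is correct. There is nothing in the paper to compare it against: this proposition is stated without any proof (it is evidently intended as an import of Proposition 4.6 of \cite{ZHL} through the trivial-MDS identification $\mathcal{E}\cong\{\omega_0\}\times K$), so your write-up actually supplies details the paper omits. The route you take is the standard one and matches the ingredients the paper does make available: the reduction to the TDS $(K,G)$ when $\Omega$ is a singleton, the $G$-invariance of $D^{(r)}$ via right-translates of F{\o}lner sequences (the same device used in the proof of Theorem \ref{DD}, where $\{F_ng_n\}$ is observed to be again a left F{\o}lner sequence), the $G_\delta$ structure of $\mathbb{E}$ from Proposition \ref{g}, the Baire-category residuality of transitive points, and the compactness upgrade from pointwise to uniform Weyl-mean equicontinuity recorded just before Proposition \ref{g}. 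The only steps you pass over lightly --- the triangle inequality for $D^{(r)}$ needed to convert the one-point condition $D^{(r)}(x_0,y)<\varepsilon$ into the two-point condition $D^{(r)}(y,z)<1/m$ on a ball, and the Lebesgue-number argument in part (3) --- are exactly the ones the paper itself uses or asserts elsewhere, so no genuine gap remains.
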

\section{Weyl-mean sensitivity for RDS}

In this section, we will introduction Weyl-mean sensitive point and Weyl-mean sensitivity for RDS $\mathbf{F}$ associated to an infinite countable discrete amenable group action.
Let $\omega\in \Omega$, $x,y\in X$, $\mathcal{F}=\{F_{n}\}_{n\in \mathbb{N}}$ be a F{\o}lner sequence of $G$, we  define
$$
\hat{D}^{(r)}_{\mathcal{F},\omega}(x,y):=\limsup_{n\rightarrow\infty}\frac{1}{|F_{n}|}\sum_{t\in F_{n}}d(F_{t,\omega}x,F_{t,\omega}y),
$$
when $x,y\in\mathcal{E}_{\omega}$ for some $\omega\in\Omega$; otherwise $\hat{D}^{(r)}_{\mathcal{F},\omega}(x,y)=\infty$. Set
\begin{equation}\label{6}
  \hat{D}^{(r)}_{\omega}(x,y)=\sup_{\mathcal{F}}\hat{D}^{(r)}_{\mathcal{F},\omega}(x,y),
\end{equation}
and
$$
\hat{D}^{(r)}(x,y)=\sup_{\omega\in \Omega}\hat{D}^{(r)}_{\omega}(x,y).
$$

Note that
$$
\hat{D}^{(r)}_{\mathcal{F},\omega}(x,y)\leq D^{(r)}_{\mathcal{F}}(x,y)
$$
which implies that for $\omega\in \Omega$ and $x,y\in X$,

\begin{equation}
  \hat{D}^{(r)}_{\omega}(x,y)\leq D^{(r)}(x,y).
\end{equation}

Thus, for all $x,y\in X$,
$$
\hat{D}^{(r)}(x,y)\leq D^{(r)}(x,y),
$$
when $\Omega=\{\omega\}$, $\hat{D}^{(r)}(x,y)= D^{(r)}(x,y)$.

\begin{definition}
Let $\mathbf{F}$ a continuous random dynamical system. We call the point $x\in X$ is  {\it {Weyl-mean sensitive point}} for $\mathbf{F}$, there exist
$\omega\in \Omega$ and $\delta>0$, such that for every $\varepsilon>0$, there is $y\in\mathcal{E}_{\omega}\cap B(x,\varepsilon)$ satisfying
$$
 \hat{D}_{\omega}^{(r)}(x, y)>\delta.
$$
It is clear that a point is either Weyl mean equicontinuous or Weyl mean sensitive.

Here the definition of the function $\dot{D}^{(r)}(\cdot\,,\cdot)$ please refer to (\ref{weya}).

We call a continuous random dynamical system $\mathbf{F}$ is {\it {Weyl-mean sensitive}} if there exist $\delta>0$ such that every
$\varepsilon>0$, there are $\omega\in \Omega$ and $y\in \mathcal{E}_{\omega}\cap B(x,\varepsilon)$ satisfying
$$
 \hat{D}_{\omega}^{(r)}(x, y)>\delta.
$$
\end{definition}

\begin{definition}\cite{ZHL}
Let $G\curvearrowright X$ be a continuous action and $x\in X$ be a point. We call the point $x$ is  {\it {Weyl-mean sensitive point}} if there exists $\delta>0$
such that for every $\varepsilon>0$, there is $y\in B(x,\varepsilon)$ satisfying
$$
D(x, y)>\delta.
$$
\end{definition}

Regarding the relation between the Weyl-mean sensitive point of random dynamical systems and Weyl-mean sensitive point of group action systems, we can draw the
following conclusions.

\begin{fact}
Let $X$ be a compact metric space, $G$ be a group and $(\Omega,\mathcal{F},\mathbb{P},G)$ is a trivial MDS. If $x\in X$ is {\it {Weyl-mean sensitive point}} for
the continuous random dynamical system
$$\mathbf{F}=\{F_{g,\omega}:\mathcal{E}_{\omega}\rightarrow\mathcal{E}_{g\omega}\mid g\in G, \omega\in \Omega\},$$ then $x\in K$
is {\it {Weyl-mean sensitive point}} for an action $G\curvearrowright K$ for some non-empty compact subset $K\subseteq X$.
\end{fact}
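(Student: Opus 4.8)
The plan is to reduce this to the standard dictionary between a continuous bundle RDS over a trivial MDS and a topological $G$-action on a fixed compact set, under which the pseudometric $\hat D^{(r)}_\omega$ becomes the Weyl pseudometric $D$ of that action. Since $\Omega=\{\omega\}$ is a singleton, the discussion in Section 2 identifies $\mathbf F$ with a topological dynamical $G$-system $(K,G)$, where $K:=\mathcal E_\omega\subseteq X$ is nonempty compact, $F_g:=F_{g,\omega}$ is a homeomorphism of $K$ with $F_{g_2}\circ F_{g_1}=F_{g_2g_1}$ and $F_{e_G}=\mathrm{id}_K$, and the action is $g\cdot z=F_{g,\omega}(z)$ for $z\in K$. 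The hypothesis that $x$ is a Weyl-mean sensitive point of $\mathbf F$ then produces a $\delta>0$ such that for every $\varepsilon>0$ there is a point $y_\varepsilon\in\mathcal E_\omega\cap B(x,\varepsilon)$ with $\hat D^{(r)}_\omega(x,y_\varepsilon)>\delta$.

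First I would place $x$ inside the fibre: for each $\varepsilon>0$ the ball $B(x,\varepsilon)$ meets $\mathcal E_\omega$ (witnessed by $y_\varepsilon$), so $x\in\overline{\mathcal E_\omega}$, and since $\mathcal E_\omega$ is compact, hence closed in $X$, we get $x\in\mathcal E_\omega=K$. Thus $x$ and every $y_\varepsilon$ lie in $K$, and it remains only to check Weyl-mean sensitivity of $x$ for the action $G\curvearrowright K$.

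Next I would translate the pseudometrics. For $z,w\in K$ and any F{\o}lner sequence $\mathcal F=\{F_n\}$, the fact that $\Omega$ is a singleton makes $\widetilde d(tz,tw)=\sup_{\omega'\in\Omega}d(F_{t,\omega'}z,F_{t,\omega'}w)=d(F_{t,\omega}z,F_{t,\omega}w)=d(tz,tw)$, so $\hat D^{(r)}_{\mathcal F,\omega}(z,w)=D^{(r)}_{\mathcal F}(z,w)=D_{\mathcal F}(z,w)$ straight from the definitions; taking the supremum over all F{\o}lner sequences yields $\hat D^{(r)}_\omega(z,w)=D^{(r)}(z,w)=D(z,w)$, which is the identity $\hat D^{(r)}=D^{(r)}$ recorded just before the Weyl-mean sensitivity definition, together with $D^{(r)}=D$ on $K$. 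Applying this with $z=x$ and $w=y_\varepsilon$, the same $\delta>0$ works: for every $\varepsilon>0$ we have $y_\varepsilon\in K\cap B(x,\varepsilon)\subseteq B(x,\varepsilon)$ and $D(x,y_\varepsilon)=\hat D^{(r)}_\omega(x,y_\varepsilon)>\delta$, which is exactly the definition of $x$ being a Weyl-mean sensitive point of $G\curvearrowright K$, so the proof is complete.

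The argument is essentially bookkeeping. The only step that is more than a definition chase is the first one, namely verifying that the Weyl-mean sensitive point of the bundle system genuinely lies in the fibre $\mathcal E_\omega$ so that the conclusion even makes sense; this is where compactness of $\mathcal E_\omega$ is used. Everything else is the correspondence between an RDS over a trivial MDS and a topological $G$-action, plus the elementary identity $\hat D^{(r)}_\omega=D$ on $\mathcal E_\omega$ already implicit in Section 4.
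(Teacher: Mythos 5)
Your argument is correct. The paper states this as a Fact with no proof attached, so there is nothing to compare against; your reduction is the intended one, namely that for a trivial MDS the fibre $K=\mathcal{E}_{\omega}$ is the compact set carrying the topological $G$-action, and $\hat{D}^{(r)}_{\omega}$ literally coincides with the Weyl pseudometric $D$ of that action (the paper itself records $\hat{D}^{(r)}=D^{(r)}$ when $\Omega$ is a singleton). Your one nontrivial observation --- that $x$ must lie in $\mathcal{E}_{\omega}$ because every ball about $x$ meets the closed set $\mathcal{E}_{\omega}$ --- is a worthwhile addition, since the Fact's conclusion ``$x\in K$'' is otherwise left unjustified by the definitions alone.
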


\begin{prop}\label{Prop6}
Let $G$ be a countable amenable group and $X$ a compact metric space.
$$\mathbf{F}=\{F_{g,\omega}:\mathcal{E}_{\omega}\rightarrow\mathcal{E}_{g\omega}\mid g\in G, \omega\in \Omega\}$$ be continuous random dynamical system.
If there exist $\omega\in A$ and $\delta>0$ such that for every non-empty open subset $U_{\omega}$ of
$\mathcal{E}_{\omega}$, there are $g\in G, U'\subseteq \mathcal{E}_{g\omega}$ (there is $U_{0}\subseteq X$ satisfies
$U_{\omega}=U_{0}\cap\mathcal{E}_{\omega},U'=U_{0}\cap\mathcal{E}_{g\omega}$), there are $x_{0},y_{0}\in U'$ satisfying $\hat{D}_{g\omega}^{(r)}(x_{0}, y_{0})>\delta$.
Then the continuous random dynamical system $\mathbf{F}$ is Weyl-mean sensitivity.
\end{prop}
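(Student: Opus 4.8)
The plan is to show that the constant $\delta/2$ witnesses Weyl-mean sensitivity of $\mathbf F$, where $\delta$ and $\omega\in A$ are the data supplied by the hypothesis; the idea is simply to transport a $\hat D^{(r)}$-separated pair produced by the hypothesis onto an arbitrary base point by means of a triangle inequality. (It suffices to verify the sensitivity condition at points $x\in\mathcal E_\omega$, since at a point lying in no fibre the relevant quantity $\hat D^{(r)}$ is $+\infty$.) The one preliminary fact I would record is that, for each fixed $\eta\in\Omega$, the function $\hat D^{(r)}_{\eta}(\cdot,\cdot)$ is an $[0,\infty]$-valued pseudometric on $X$: for a fixed F{\o}lner sequence $\mathcal F=\{F_n\}$ and $t\in G$ one has $d(F_{t,\eta}x,F_{t,\eta}z)\le d(F_{t,\eta}x,F_{t,\eta}y)+d(F_{t,\eta}y,F_{t,\eta}z)$ whenever $x,y,z\in\mathcal E_{\eta}$, so averaging over $t\in F_n$ and using subadditivity of $\limsup$ gives $\hat D^{(r)}_{\mathcal F,\eta}(x,z)\le\hat D^{(r)}_{\mathcal F,\eta}(x,y)+\hat D^{(r)}_{\mathcal F,\eta}(y,z)\le\hat D^{(r)}_{\eta}(x,y)+\hat D^{(r)}_{\eta}(y,z)$; taking the supremum over $\mathcal F$ on the left yields the triangle inequality for $\hat D^{(r)}_{\eta}$, and this remains valid with the convention $\hat D^{(r)}_{\eta}\equiv+\infty$ off $\mathcal E_{\eta}$.

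Now for the core step, fix $x\in\mathcal E_\omega$ and $\varepsilon>0$. Applying the hypothesis to the nonempty open subset $B(x,\varepsilon)\cap\mathcal E_\omega$ of $\mathcal E_\omega$, with the ambient open set of $X$ taken to be $U_0=B(x,\varepsilon)$, produces $g\in G$ together with points $x_0,y_0\in U'=B(x,\varepsilon)\cap\mathcal E_{g\omega}$ with $\hat D^{(r)}_{g\omega}(x_0,y_0)>\delta$. Applying the triangle inequality for $\hat D^{(r)}_{g\omega}$ to the triple $(x_0,x,y_0)$ gives $\delta<\hat D^{(r)}_{g\omega}(x_0,x)+\hat D^{(r)}_{g\omega}(x,y_0)$, so at least one of $\hat D^{(r)}_{g\omega}(x,x_0)$ and $\hat D^{(r)}_{g\omega}(x,y_0)$ exceeds $\delta/2$ (both are $+\infty$ if $x\notin\mathcal E_{g\omega}$). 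Denoting by $y$ such a point of $\{x_0,y_0\}\subseteq\mathcal E_{g\omega}\cap B(x,\varepsilon)$, we have produced $\omega':=g\omega\in\Omega$ and $y\in\mathcal E_{\omega'}\cap B(x,\varepsilon)$ with $\hat D^{(r)}_{\omega'}(x,y)>\delta/2$. Since $\varepsilon>0$ and $x$ were arbitrary, $\mathbf F$ is Weyl-mean sensitive with sensitivity constant $\delta/2$.

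The routine part is the pseudometric verification. The one point that genuinely needs care is the bookkeeping with the auxiliary open set $U_0\subseteq X$ in the hypothesis: one must make sure it may be taken to be the ball $B(x,\varepsilon)$ itself (equivalently, the hypothesis is read as ``for every open $U_0\subseteq X$ meeting $\mathcal E_\omega$, \dots''), so that the separated pair $x_0,y_0$ really lies inside $B(x,\varepsilon)$ and not merely inside some larger open set agreeing with $B(x,\varepsilon)$ on $\mathcal E_\omega$. Granting this reading, the argument is purely formal and I anticipate no further obstacle.
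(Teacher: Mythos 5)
Your proposal is correct and follows essentially the same route as the paper: apply the hypothesis to $U_0=B(x,\varepsilon)$, obtain a separated pair $x_0,y_0\in B(x,\varepsilon)\cap\mathcal{E}_{g\omega}$, and split via the triangle inequality for $\hat{D}^{(r)}_{g\omega}$ (the paper rescales the hypothesis to $2\delta$ and concludes with constant $\delta$, while you keep $\delta$ and conclude with $\delta/2$ --- a purely cosmetic difference). Your explicit verification that $\hat{D}^{(r)}_{\eta}$ is a pseudometric, and your remark on reading the hypothesis so that $U_0$ may be taken to be the ball itself, are details the paper leaves implicit.
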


\begin{proof}
Suppose that there exist $\omega\in A$ and $\delta>0$ such that for every non-empty open subset $U_{\omega}$ of
$\mathcal{E}_{\omega}$, there are $g\in G, U'\subseteq \mathcal{E}_{g\omega}$ (there is $U_{0}\subseteq X$ satisfies
$U_{\omega}=U_{0}\cap\mathcal{E}_{\omega},U'=U_{0}\cap\mathcal{E}_{g\omega}$), there are $x_{0},y_{0}\in U'$ satisfying $\hat{D}_{g\omega}^{(r)}(x_{0}, y_{0})>2\delta$.

Let $(\omega,x)\in \mathcal{E}$ and $\varepsilon>0$, then $B(x,\varepsilon)\neq\emptyset$ and $B(x,\varepsilon)\cap\mathcal{E}_{\omega}$ is open subset of
$\mathcal{E}_{\omega}$. Then there exist $g\in G$ and $y,z\in B(x,\varepsilon)\cap\mathcal{E}_{g\omega}$ satisfying $\hat{D}_{g\omega}^{(r)}(y,z)>2\delta. $

Thus we have either $\hat{D}_{g\omega}^{(r)}(x, y)>\delta $ or $\hat{D}_{g\omega}^{(r)}(x,z)>\delta. $ which implies that the continuous random dynamical system
$\mathbf{F}$ is Weyl-mean sensitivity.
\end{proof}

\begin{prop}\label{Prop7}
Let $G$ be a countable amenable group and $X$ a compact metric space. Let the skew product transformation be transitive. If $x_{0}$ is a Weyl-mean sensitivity point and
$\delta_{0}$ satisfying the condition from the definition of Weyl-mean sensitivity point $x_{0}$ on $\mathcal{E}_{\omega_{0}}$ for transitive point
$(\omega_{0},x_{0})\in \mathcal{E}$, then the continuous random dynamical system $\mathbf{F}$ is a Weyl-mean sensitivity.
\end{prop}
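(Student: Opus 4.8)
The plan is to use the dense orbit of the transitive point $(\omega_{0},x_{0})$ to spread the sensitivity witnessed at $x_{0}$ over a dense subset of $\mathcal{E}$, and then to read off Weyl-mean sensitivity of $\mathbf{F}$ with constant $\delta=\delta_{0}/2$. First I would record two elementary properties of $\hat{D}^{(r)}_{\omega}$. (a) For each fixed $\omega$, $\hat{D}^{(r)}_{\omega}(\cdot,\cdot)$ obeys the triangle inequality on $\mathcal{E}_{\omega}$: for a fixed F{\o}lner sequence $\mathcal{F}$ each average $\frac{1}{|F_{n}|}\sum_{t\in F_{n}}d(F_{t,\omega}u,F_{t,\omega}w)$ satisfies the triangle inequality in $(u,w)$, $\limsup_{n}$ preserves this, and $\sup_{\mathcal{F}}(a_{\mathcal{F}}+b_{\mathcal{F}})\le\sup_{\mathcal{F}}a_{\mathcal{F}}+\sup_{\mathcal{F}}b_{\mathcal{F}}$. (b) $\hat{D}^{(r)}$ is invariant under the skew product: for $g\in G$ and $u,v\in\mathcal{E}_{\omega}$ one has $\hat{D}^{(r)}_{g\omega}(F_{g,\omega}u,F_{g,\omega}v)=\hat{D}^{(r)}_{\omega}(u,v)$. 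Indeed, the cocycle identity $F_{t,g\omega}\circ F_{g,\omega}=F_{tg,\omega}$ gives $d(F_{t,g\omega}F_{g,\omega}u,F_{t,g\omega}F_{g,\omega}v)=d(F_{tg,\omega}u,F_{tg,\omega}v)$, and after reindexing $s=tg$ and using that $\{F_{n}\}\mapsto\{F_{n}g\}$ is a bijection of the collection of (left) F{\o}lner sequences --- a fact already used in the proof of Theorem \ref{DD} --- the two suprema over F{\o}lner sequences agree.

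Next I would show that every point of the RDS-orbit of $x_{0}$ is again a Weyl-mean sensitive point with the same constant $\delta_{0}$. Fix $g\in G$ and set $x_{0}'=F_{g,\omega_{0}}x_{0}\in\mathcal{E}_{g\omega_{0}}$. Given $\varepsilon>0$, by uniform continuity of the homeomorphism $F_{g,\omega_{0}}$ on the compact set $\mathcal{E}_{\omega_{0}}$ choose $\varepsilon'>0$ with $F_{g,\omega_{0}}(B_{\omega_{0}}(x_{0},\varepsilon'))\subseteq B(x_{0}',\varepsilon)$; since $x_{0}$ is Weyl-mean sensitive on $\mathcal{E}_{\omega_{0}}$ with constant $\delta_{0}$, pick $y\in\mathcal{E}_{\omega_{0}}\cap B(x_{0},\varepsilon')$ with $\hat{D}^{(r)}_{\omega_{0}}(x_{0},y)>\delta_{0}$, and set $y'=F_{g,\omega_{0}}y\in\mathcal{E}_{g\omega_{0}}\cap B(x_{0}',\varepsilon)$. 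By (b), $\hat{D}^{(r)}_{g\omega_{0}}(x_{0}',y')=\hat{D}^{(r)}_{\omega_{0}}(x_{0},y)>\delta_{0}$, so $x_{0}'$ is Weyl-mean sensitive on $\mathcal{E}_{g\omega_{0}}$ with constant $\delta_{0}$.

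To finish, put $\delta=\delta_{0}/2$ and let $(\omega,x)\in\mathcal{E}$ and $\varepsilon>0$ be arbitrary. Since $(\omega_{0},x_{0})$ is transitive, its orbit is dense in $\mathcal{E}$, so there is $g\in G$ with $(g\omega_{0},F_{g,\omega_{0}}x_{0})$ in the neighbourhood of $(\omega,x)$ of radius $\varepsilon/2$; in particular $g\omega_{0}=\omega$ and, writing $x'=F_{g,\omega_{0}}x_{0}$, we have $x'\in\mathcal{E}_{\omega}\cap B(x,\varepsilon)$. By the previous step $x'$ is Weyl-mean sensitive on $\mathcal{E}_{\omega}$ with constant $\delta_{0}$, so there is $y\in\mathcal{E}_{\omega}\cap B(x',\varepsilon/2)$ with $\hat{D}^{(r)}_{\omega}(x',y)>\delta_{0}$, hence also $y\in B(x,\varepsilon)$. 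Now either $\hat{D}^{(r)}_{\omega}(x,x')>\delta$, and we take $z=x'$; or $\hat{D}^{(r)}_{\omega}(x,x')\le\delta$, in which case the triangle inequality (a) gives $\hat{D}^{(r)}_{\omega}(x,y)\ge\hat{D}^{(r)}_{\omega}(x',y)-\hat{D}^{(r)}_{\omega}(x,x')>\delta_{0}-\delta=\delta$, and we take $z=y$. In either case $z\in\mathcal{E}_{\omega}\cap B(x,\varepsilon)$ and $\hat{D}^{(r)}_{\omega}(x,z)>\delta$, which is precisely the definition of Weyl-mean sensitivity of $\mathbf{F}$.

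The main obstacle is the bookkeeping in step (b), i.e.\ matching the skew-product invariance of $\hat{D}^{(r)}$ with the supremum over \emph{all} F{\o}lner sequences; conceptually, the key subtlety is that the smallness of $d(x,x')$ says nothing about $\hat{D}^{(r)}_{\omega}(x,x')$ --- this is exactly the phenomenon of sensitivity --- which is why the two-case split at the end cannot be avoided. A minor further point is that the density of the transitive orbit must be read with respect to the natural topology on $\mathcal{E}$ (for which each fibre $\{\omega\}\times\mathcal{E}_{\omega}$ is clopen when $\Omega$ is finite), so that the $\Omega$-coordinate is matched exactly and all the displayed quantities $\hat{D}^{(r)}_{\omega}$ remain finite.
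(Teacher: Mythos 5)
Your proposal is correct and takes essentially the same route as the paper: the central step --- transporting the sensitive pair along the transitive orbit via the cocycle identity together with the fact that right-translates of a F{\o}lner sequence are again F{\o}lner, so that $\hat{D}^{(r)}_{g\omega_0}(F_{g,\omega_0}x_0,F_{g,\omega_0}y_0)=\hat{D}^{(r)}_{\omega_0}(x_0,y_0)$ --- is exactly the displayed identity in the paper's proof, and your concluding triangle-inequality two-case split is precisely the content of Proposition \ref{Prop6}, which the paper invokes rather than arguing inline. The one divergence is that you require the fibre to match ($g\omega_0=\omega$), which forces the fibre-wise reading of transitivity you flag (available for finite $\Omega$), whereas the paper only needs the orbit to enter sets of the form $(\Omega\times U')\cap\mathcal{E}$ and lets the witnessing pair land in the possibly different fibre $\mathcal{E}_{s\omega_0}$ --- that is what the existential $g$ in the hypothesis of Proposition \ref{Prop6} is for.
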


\begin{proof}

By the assumption that $x_{0}$ a Weyl-mean sensitivity point and $\delta_{0}$ satisfying the condition from the
definition of Weyl-mean sensitivity point $x_{0}$ on $\mathcal{E}_{\omega_{0}}$. Given a nonempty open subset $U$ of $\mathcal{E}_{\omega_{0}}$, there exist
$U'\in\mathcal{B}$ such that
$U=U'\cap\mathcal{E}_{\omega_{0}}$. Then $(\Omega\times U')\cap\mathcal{E}$ is an open set of  $\mathcal{E}$. there is $s\in G$
such that $s(\omega_{0},x_{0})\in(\Omega\times U')\cap\mathcal{E}$ this since the skew product transformation be transitive and $(\omega_{0},x_{0})\in\mathcal{E}$
is a transitive point. And so, $F_{s,\omega_{0}}x_{0}\in U'\cap\mathcal{E}_{s\omega_{0}}$. Furthermore, as $x_{0}\in F_{s,\omega_{0}}^{-1} U'$ and
$F_{s,\omega_{0}}^{-1} U'$ is open, there is $\epsilon>0$ such that $B(x_{0},\epsilon)\subseteq F_{s,\omega_{0}}^{-1}U'$, that is,
$F_{s,\omega_{0}}B(x_{0},\epsilon)\subseteq U'$. By the assumption that $x_{0}$ a Weyl-mean sensitivity point and $\delta_{0}$ satisfying the condition from the
definition of Weyl-mean sensitivity point $x_{0}$, then there exists $y_{0}\in
\mathcal{E}_{\omega_{0}}\cap B(x_{0},\epsilon)$ satisfying
$$
 \hat{D}^{(r)}_{\omega_{0}}(x_{0}, y_{0})>\delta.
$$

By the definition of $\hat{D}^{(r)}_{\omega_{0}}(x_{0}, y_{0})$, there is a (left) F{\o}lner sequence $\mathcal{F}=\{F_n\}_{n\in\mathbb{N}}$ of $G$ such that
\begin{equation*}
 \hat{D}^{(r)}_{\mathcal{F},\omega_{0}}(x_{0}, y_{0})>\delta.
\end{equation*}

Let $u=F_{s,\omega_{0}}x_{0}$, $v=F_{s,\omega_{0}}y_{0}$. Noting that $\mathcal{F}s=\{F_n s^{-1}\}_{n\in\mathbb{N}}$ being also a (left)
F{\o}lner sequence and $u, v\in U'\subseteq\mathcal{E}_{s\omega_{0}}$, then
\begin{equation*}
\hat{D}^{(r)}_{\omega_{0}}(u, v)\geq \hat{D}^{(r)}_{\mathcal{F}s^{-1},s\omega_{0}}(u,
v)=\hat{D}^{(r)}_{\mathcal{F}s^{-1},s\omega_{0}}(F_{s,\omega_{0}}x_{0},
F_{s,\omega_{0}}y_{0}) =\hat{D}^{(r)}_{\mathcal{F},\omega_{0}}(x_{0}, y_{0})>\delta.
\end{equation*}
Therefore the continuous random dynamical system $\mathbf{F}$ is
Weyl-mean sensitivity by Proposition \ref{Prop6}.
\end{proof}
\section{Main result}

We known that if a dynamical system $(X, T )$ is minimal then $(X, T )$ is either mean equicontinuous or mean sensitive, and if $(X, T )$ is transitive then $(X, T )$ is
either almost mean equicontinuous or mean sensitive \cite{LTY1}. If the action $G\curvearrowright X$ is transitive, then the action $G\curvearrowright X$ is either almost
Weyl-mean equicontinuous or Weyl-mean sensitive. Let $G\curvearrowright X$ be a minimal system, then the action $G\curvearrowright X$ is either Weyl-mean sensitive or
Weyl-mean equicontinuous \cite{ZHL}.

In this section, it turns out that if the action $G\curvearrowright \mathcal{E}$  is minimal when $\Omega$ is finite, then RDS is either Weyl-mean equicontinuous or Weyl-mean sensitive.

\begin{Corollary}
Let $G$ be a countable amenable group and $X$ a compact metric space. If the skew product transformation $G\curvearrowright\mathcal{E}$ is minimal, then the
continuous random dynamical system $\mathbf{F}$ is either Weyl-mean equicontinuous or Weyl-mean sensitivity when $\Omega$ is finite.
\end{Corollary}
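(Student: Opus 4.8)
The plan is to prove the contrapositive: if $\mathbf{F}$ is not Weyl-mean equicontinuous, then $\mathbf{F}$ is Weyl-mean sensitive. The two possibilities are mutually exclusive because $\hat D^{(r)}_\omega(x,y)\le D^{(r)}(x,y)$ for every $\omega$, so this will give the dichotomy. The proof combines three ingredients already available: Proposition \ref{Prop7}; the observation recorded just before Proposition \ref{g} that, when $\Omega$ is finite, $\mathbf{F}$ is Weyl-mean equicontinuous if and only if every point of $\bigcup_{\omega\in A}\mathcal{E}_\omega$ is a Weyl-mean equicontinuous point; and the elementary fact that minimality of the skew product $G\curvearrowright\mathcal{E}$ makes every point of $\mathcal{E}$ transitive (in particular the skew product is then transitive, so the standing hypothesis of Proposition \ref{Prop7} is met).

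First I would use the finiteness of $\Omega$ to locate a Weyl-mean sensitive point. Assuming $\mathbf{F}$ is not Weyl-mean equicontinuous, the cited pointwise characterization yields a point $x\in\bigcup_{\omega\in A}\mathcal{E}_\omega$ that is \emph{not} a Weyl-mean equicontinuous point; fix it. Then there are $\varepsilon_0>0$ and, for each $n\in\mathbb{N}$, a point $y_n$ lying in a common fiber with $x$, with $d(x,y_n)<1/n$ and $D^{(r)}(x,y_n)\ge\varepsilon_0$. Since $\widetilde d(tx,ty)=\sup_\omega d(F_{t,\omega}x,F_{t,\omega}y)\le\sum_\omega d(F_{t,\omega}x,F_{t,\omega}y)$, where both the $\sup$ and the sum range over the finitely many $\omega\in A$ with $x,y\in\mathcal{E}_\omega$, one gets $D^{(r)}(x,y_n)\le|\Omega|\cdot\max_\omega\hat D^{(r)}_\omega(x,y_n)$, so for each $n$ some index $\omega_n$ (with $x,y_n\in\mathcal{E}_{\omega_n}$) satisfies $\hat D^{(r)}_{\omega_n}(x,y_n)\ge\varepsilon_0/|\Omega|$. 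As $\Omega$ is finite, by the pigeonhole principle a single $\omega$ occurs for infinitely many $n$; passing to that subsequence, $y_n\in\mathcal{E}_\omega\cap B(x,1/n)$ with $\hat D^{(r)}_\omega(x,y_n)\ge\varepsilon_0/|\Omega|$ and $y_n\to x$. This says exactly that $x$ is a Weyl-mean sensitive point, with $\delta_0:=\varepsilon_0/|\Omega|$ and $\omega$ as witnessing data, and in particular $(\omega,x)\in\mathcal{E}$.

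Next I would feed this into Proposition \ref{Prop7}. By minimality of $G\curvearrowright\mathcal{E}$ the point $(\omega,x)\in\mathcal{E}$ is transitive, so $x$ is a Weyl-mean sensitive point whose witnessing pair $(\delta_0,\omega)$ sits over a transitive point of $\mathcal{E}$; this is precisely the hypothesis of Proposition \ref{Prop7}, and that proposition then gives that $\mathbf{F}$ is Weyl-mean sensitive. This proves the contrapositive, hence the dichotomy.

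I expect the only delicate point — the one the text waves through as ``clear'' — to be the promotion of ``$x$ is not a Weyl-mean equicontinuous point'' to ``$x$ is a Weyl-mean sensitive point''. This needs the finiteness of $\Omega$ twice: once to control $D^{(r)}$ by $\max_\omega\hat D^{(r)}_\omega$ up to the multiplicative constant $|\Omega|$, and once to extract, via pigeonhole, a single fiber index $\omega$ that works along an entire subsequence of the approximants $y_n\to x$. Everything else is a direct appeal to Proposition \ref{Prop7}, to the finite-$\Omega$ pointwise description of Weyl-mean equicontinuity, and to transitivity of every point under minimality.
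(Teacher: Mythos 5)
Your proposal is correct in substance, but it is organized quite differently from the paper's own proof, and the difference matters. The paper fixes a transitive point $(\omega_0,x_0)\in\mathcal{E}$ (which exists by minimality) and splits into two cases: if $x_0$ is a Weyl-mean sensitive point, Proposition \ref{Prop7} gives sensitivity of $\mathbf{F}$ --- the step you share; if $x_0$ is not a sensitive point, the paper declares it a Weyl-mean equicontinuous point and then passes immediately to ``so $\mathbf{F}$ is Weyl-mean equicontinuous.'' That last passage is the analogue of ``minimal $+$ almost mean equicontinuous $\Rightarrow$ mean equicontinuous'' and is not justified anywhere in the text. Your contrapositive route avoids needing that implication altogether: you only invoke the compactness-based direction ``every point is a WME point $\Rightarrow$ $\mathbf{F}$ is WME'' of the finite-$\Omega$ pointwise characterization stated before Proposition \ref{g}, and you pay for it by having to promote ``$x$ is not a WME point'' to ``$x$ is a Weyl-mean sensitive point.'' You do this carefully via $D^{(r)}(x,y)\le |A|\max_{\omega}\hat{D}^{(r)}_{\omega}(x,y)$ together with a pigeonhole over the finitely many fibers --- exactly the step the paper dismisses as ``clear,'' and which is genuinely not clear without finiteness of $\Omega$, since the supremum over $\omega$ sits inside the averaged sum in $D^{(r)}$ but outside it in $\hat{D}^{(r)}_{\omega}$. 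Your appeal to minimality to make the located point $(\omega,x)$ transitive, so that Proposition \ref{Prop7} applies, is also correct. Two cosmetic repairs: take $\delta_0=\varepsilon_0/(2|A|)$ to get the strict inequality required by the definition of a sensitive point, and note that your tacit assumption that the approximants $y_n$ share a fiber with $x$ is the only reading under which $D^{(r)}(x,y_n)$ and $\hat{D}^{(r)}_{\omega}(x,y_n)$ are finite, consistent with the paper's (loosely stated) conventions. In short: same engine (Proposition \ref{Prop7}), but your decomposition closes the two gaps that the paper's two-case argument leaves open.
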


\begin{proof}
If $x_{0}$ is a Weyl-mean sensitivity point and $\delta_{0}$ satisfying the condition from the definition of Weyl-mean sensitivity point $x_{0}$ for transitive point
$(\omega_{0},x_{0})\in \mathcal{E}$, then the continuous random dynamical system $\mathbf{F}$ is Weyl-mean sensitivity by Proposition \ref{Prop7}. If $x_{0}$ is not
a Weyl-mean sensitivity point, then it is a Weyl-mean equicontinuous point. So the continuous random dynamical system $\mathbf{F}$ is Weyl-mean equicontinuous.
\end{proof}

\begin{fact}\label{tran}
Let $X$ be a compact metric space, $G$ be a group and $(\Omega,\mathcal{F},\mathbb{P},G)$ is a trivial MDS. If the natural measurable action $G$ on $\mathcal{E}$ is
minimal, then the action $G\curvearrowright K$ is topological minimal for some non-empty compact subset $K\subseteq X$.
\end{fact}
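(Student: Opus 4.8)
The plan is to unravel the definition of a trivial MDS and observe that the skew product over it is literally a topological $G$-action on a compact subset of $X$. Since $(\Omega,\mathcal{F},\mathbb{P},G)$ is trivial, $\Omega$ is a singleton, say $\Omega=\{\omega_0\}$. Put $K:=\mathcal{E}_{\omega_0}$; by the standing hypothesis on continuous bundle RDS this is a non-empty compact subset of $X$, and, as recalled in Section 2, the family $\{F_{g,\omega_0}:g\in G\}$ consists of homeomorphisms of $K$ satisfying $F_{g_2,\omega_0}\circ F_{g_1,\omega_0}=F_{g_2g_1,\omega_0}$ for all $g_1,g_2\in G$ and $F_{e_G,\omega_0}=\mathrm{id}_K$. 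Hence $(K,G)$, with $g\cdot x:=F_{g,\omega_0}x$, is a TDS.

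Next I would identify $\mathcal{E}$ with $K$ as a $G$-space. Since $\Omega=\{\omega_0\}$, the projection $\pi:\mathcal{E}\to K$ given by $\pi(\omega_0,x)=x$ is a homeomorphism: the product topology on $\{\omega_0\}\times X$ restricted to $\mathcal{E}$ coincides with the subspace topology of $K\subseteq X$. Moreover $\pi$ intertwines the skew product transformation $g\cdot(\omega_0,x)=(\omega_0,F_{g,\omega_0}x)$ with the action $g\cdot x=F_{g,\omega_0}x$ on $K$, since $\pi\big(g\cdot(\omega_0,x)\big)=F_{g,\omega_0}x=g\cdot\pi(\omega_0,x)$. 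Thus $\pi$ is an equivariant homeomorphism between the skew product system on $\mathcal{E}$ and the TDS $(K,G)$.

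Finally, minimality transfers along $\pi$. A subset $C\subseteq K$ is closed and $G$-invariant if and only if $\pi^{-1}(C)\subseteq\mathcal{E}$ is closed and invariant under the skew product transformation, because $\pi$ is a homeomorphism and equivariant; hence $K$ admits a non-empty proper closed $G$-invariant subset precisely when $\mathcal{E}$ does. Since the natural action $G\curvearrowright\mathcal{E}$ is minimal, $\mathcal{E}$ has no such subset, so neither does $K$, i.e.\ $G\curvearrowright K$ is topologically minimal. There is no serious obstacle here: the only point requiring a line of care is checking that the subspace topology on $\mathcal{E}$ agrees with that of $K\subseteq X$, which is immediate because $\Omega$ is a single point, and then the correspondence of closed invariant sets is a formal consequence of $\pi$ being an equivariant homeomorphism.
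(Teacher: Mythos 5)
Your proof is correct and is exactly the argument the paper intends: the paper states this as a Fact without a written proof, relying on the Section 2 observation that a continuous bundle RDS over a trivial MDS is just a TDS $(K,G)$ with $K=\mathcal{E}_{\omega_0}$, and your identification of $\mathcal{E}$ with $K$ via the equivariant homeomorphism $(\omega_0,x)\mapsto x$ makes the transfer of minimality explicit. No gaps.
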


We end this article with a question.

\begin{question}
Let $G$ be a countable discrete amenable group, $(\Omega,\mathcal{F},\mathbb{P},G)$ an MDS,
$$\mathbf{F}=\{F_{g,\omega}:\mathcal{E}_{\omega}\rightarrow \mathcal{E}_{g\omega}\mid g\in G, \omega\in \Omega\}$$
be an RDS over $(\Omega,\mathcal{F},\mathbb{P},G)$, $\{F_{n}\}_{n\in \mathbb{N}}$ be a F{\o}lner sequence of $G$.
Then for $x,y\in \mathcal{E}_{\omega}$ for some $\omega\in A$, does the following formula make sense?
$$\limsup_{n\rightarrow\infty}\frac{1}{|F_{n}|}\sum_{t\in F_{n}}\int_{\Omega}d(F_{t,\omega}x,F_{t,\omega}y)dP(\omega).$$
\end{question}

Observe that if the above formula makes sense, we denote it by $\tilde{D}_{\mathcal{F}}^{(r)}(x,y)$, it is not hard to find
$\tilde{D}_{\mathcal{F}}^{(r)}(x,y)=D_{\mathcal{F}}^{(r)}(x,y)$.



\bibliographystyle{amsplain}

\end{document}